\documentclass{article}
\usepackage{amsmath,amssymb,mathrsfs,amsthm}
\usepackage{array,enumerate}
\usepackage{latexsym}
\usepackage{mathrsfs}
\usepackage{amscd}
\usepackage[all]{xy}
\newtheorem{thm}{Theorem}[section]
\newtheorem{dfn}[thm]{Definition}
\newtheorem{prop}[thm]{Proposition}
\newtheorem{cor}[thm]{Corollary}
\newtheorem{lem}[thm]{Lemma}
\newtheorem{rem}[thm]{Remark}
\newtheorem{cnst}[thm]{Construction}
\title{Special values of zeta functions of varieties over finite fields via higher Chow groups}
\author{Hiroyasu Miyazaki}

\begin{document}

\maketitle

\begin{abstract}
In this paper we study special values of zeta functions of singular varieties over finite fields.
We give a new formula of special values by constructing a morphism of homology theories, which we call regulator, from Bloch's higher Chow group to weight homology.
The main idea of our proof is to use weight spectral sequence of homology theories, whose $E^{1}$-terms are homology groups for smooth projective schemes.
\end{abstract}

\section*{Introduction}
\addcontentsline{toc}{section}{Introduction}

For a separeted scheme $X$ of finite type, the zeta function of $X$ is defined to be an infinite product
\[
\zeta_{X}(s) := \prod_{x \in X : closed}\frac{1}{1-(\# k(x))^{-s}},
\]
where the product runs over all closed points of $X$, $k(x)$ denotes the finite residue field for a closed point $x$ and $\#k(x)$ its cardinality.
Note that the infinite product converges absolutely for any complex number $s$ satisfying $Re(s) \gg 0$ (\cite{Se}). 

If $X$ is a separated scheme of finite type over a finite field $k=\mathbb{F}_{q}$ of $q$ elements, then the zeta function of $X$ has another expression
\[
\zeta_{X}(s) = \prod_{x \in X : closed}\frac{1}{1-t^{\mathrm{deg}(x)}}
\]
by regarding $t=q^{-s}$ as a new variable.
Here, $\deg(x)$ is the degree of finite field extension $k(x)/k$.
We denote the right hand side of the above equation by $Z_{X/k}(t)$, or simply by $Z_{X}(t)$ if the choice of base field is clear from the context.
As a consequence of Grothendieck's determinant formula, $Z_{X}(t)$ is a rational function with rational coefficients.
For an integer $r$, the special value $\zeta_{X}(r)^{\ast}$ of $\zeta_{X}(s)$ at $s=r$ is the leading coefficient of Laurent expansion of $Z_{X}(t)$ at $t=q^{-r}$.
Note that any special value is always a rational number.

\ 

There are a lot of remarkable formulas which express special values via arithmetic invariants.
Milne's formula which uses \'{e}tale cohomology is a famous example (\cite{Mil}). 
Geisser found similar formulas for Weil-\'{e}tale cohomology (\cite{Gei1}) and arithmetic homology (\cite{Gei4}) and so on.
Kerz and Saito proved the following formula, which uses higher Chow groups, in \cite{KeS1}:
for any smooth proper geometrically irreducible scheme $X$ of dimension $d$ over a finite field, 
\[
\zeta_{X}(0)^{\ast} = \prod_{i=0}^{2d} (\# \mathrm{CH}_{0}(X,i)_{\mathrm{tor}})^{(-1)^{i}}.
\]
Our main goal is to generalize this formula to the case that $X$ is an arbitrary singular variety over a finite field.

\ 

In this paper, we use two invariants to express our formula. One is  Bloch's higher Chow groups $\mathrm{CH}_{0}(X,i)$, and the other is the weight homology $H_{i}^{W}(X)$, which was introduced by Gillet-Soul\'{e} and Jannsen in the situation that resolution of singularities exsists (\cite{GS1}, \cite{Jan}, \cite{Jan2}).
Kerz-Saito and Gillet-Soul\'{e} extended their construction over any perfect field by using Gabber's refined alteration instead of resolution of singularities (\cite{Il}, \cite{KeS2}, \cite{GS2}).
These can be regarded as homology theories (Definition \ref{def_homology_theory}) on the category $\mathrm{Var}_{k}$ of varieties over a perfect field $k$.
In the second section, we construct a morphism of homology theories
\[
\mathrm{Reg}_{\ast} : \mathrm{CH}_{0}(-,\ast) \otimes_{\mathbb{Z}} \Lambda \to H_{\ast}^{W},
\]
which we call "regulator", where $\Lambda = \mathbb{Z}$ if resolution of singularities holds, and $\Lambda = \mathbb{Z}[1/p]$ otherwise ($p$ is the characteristic of the base field).
Before stating the main result, let us consider the followng condition for any quasi-projective scheme $X$ of dimension $d$ over a finite field $k$:

\ 

$( \spadesuit )_{X/k}$: 
the regulator map $\mathrm{Reg}_{i}(X)_{\mathbb{Q}} : \mathrm{CH}_{0}(X,i)_{\mathbb{Q}} \to H_{i}^{W}(X)_{\mathbb{Q}}$ after tensoring $\mathbb{Q}$ over $\mathbb{Z}$ is surjective for each $i = 2, \dots, d$.

\ 

Assuming Parshin's conjecture, which implies that higher Chow groups of smooth proper schemes over a finite fields is torsion,  $( \spadesuit )_{X/k}$ holds for any $X$.
Indeed, one can prove that $\mathrm{Reg}_{i}(X)_{\mathbb{Q}}$ is an {\it isomorphism} (see Remark \ref{parshin_bijectivity}).

\begin{thm}\label{main_thm_resol_intro}(Theorem \ref{main_thm_bounded})
Let $k$ be a finite field $\mathbb{F}_{q}$ with characteristic $p$, and $X$ be a quasi-projective scheme of dimension $d$ over $k$.
Suppose that the condition $( \spadesuit )_{X/k}$ holds.
Then, $\chi (\mathrm{Reg}_{i}(X))$ is well-defined as a positive rational number for each $i$, equals to $1$  for each $i>2d$, and the following equation holds up to sign and a power of $p$:
\[
\zeta_{X}(0)^{\ast} = \prod_{i=0}^{2d} \chi (\mathrm{Reg}_{i}(X))^{(-1)^{i+1}}.
\]
Moreover, if resolution of singularities exists for any  variety of dimension $\leq d$ over $k$, then the equation holds only up to sign.
\end{thm}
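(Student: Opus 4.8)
The plan is to bootstrap from the smooth projective case, where the Kerz--Saito formula is available, to the general singular case by means of the two weight spectral sequences and the fact that $\mathrm{Reg}_\ast$ is a morphism of homology theories. Throughout I regard the multiplicative Euler characteristic $\chi(f) = \#\mathrm{coker}(f)/\#\ker(f)$ of a homomorphism $f$ of finitely generated $\Lambda$-modules with finite kernel and cokernel. The condition $(\spadesuit)_{X/k}$ supplies the rational surjectivity, and a comparison of ranks on the $E^1$-page (where the higher Chow groups of smooth projective schemes are controlled) makes the free parts cancel, so that $\ker$ and $\mathrm{coker}$ are finite; this is what gives well-definedness as a positive rational number. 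Since both homology theories vanish outside a bounded range, one gets $\chi(\mathrm{Reg}_i(X)) = 1$ for $i > 2d$.

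\textbf{Base case.} First I would treat a smooth projective scheme $Y$ over $k$. Here I would compute $H_\ast^W(Y)$ and the regulator $\mathrm{Reg}_\ast(Y)$ explicitly, so that the alternating product $\prod_j \chi(\mathrm{Reg}_j(Y))^{(-1)^{j+1}}$ is expressed through the torsion subgroups $\mathrm{CH}_0(Y,j)_{\mathrm{tor}}$. Combined with the Kerz--Saito formula
\[
\zeta_Y(0)^\ast = \prod_{j}(\#\mathrm{CH}_0(Y,j)_{\mathrm{tor}})^{(-1)^j},
\]
this yields $\zeta_Y(0)^\ast = \prod_j \chi(\mathrm{Reg}_j(Y))^{(-1)^{j+1}}$ up to sign and, when $\Lambda = \mathbb{Z}[1/p]$, a power of $p$. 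This establishes the target identity on each term of the $E^1$-page.

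\textbf{Globalization.} Next I would exploit that both $\mathrm{CH}_0(-,\ast) \otimes \Lambda$ and $H_\ast^W$ carry weight spectral sequences whose $E^1$-terms are the corresponding groups of the smooth projective schemes $Y_n$ in a hyperenvelope (a Gabber alteration in general, or a resolution if $\Lambda = \mathbb{Z}$) of $X$, and that $\mathrm{Reg}_\ast$ induces a morphism between them. On the zeta side, the compatibility of the zeta function with the hyperenvelope---an envelope being surjective on points over every finite extension of $k$---together with Grothendieck's trace formula gives $\zeta_X(0)^\ast = \prod_n (\zeta_{Y_n}(0)^\ast)^{(-1)^n}$. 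On the homology side, the morphism of spectral sequences is compatible with the differentials, so by multiplicativity of $\chi$ across each page and along the finite weight filtration of the abutment one obtains
\[
\prod_i \chi(\mathrm{Reg}_i(X))^{(-1)^{i+1}} = \prod_{n,j} \chi(\mathrm{Reg}_j(Y_n))^{(-1)^{n+j+1}} = \prod_n\Big(\prod_j \chi(\mathrm{Reg}_j(Y_n))^{(-1)^{j+1}}\Big)^{(-1)^n}.
\]
Substituting the base case into the right-hand side and comparing with the zeta factorization yields the asserted formula. The power of $p$ is lost precisely because, in the absence of resolution, the hyperenvelope forces $\Lambda = \mathbb{Z}[1/p]$, discarding $p$-primary torsion; with resolution one keeps $\Lambda = \mathbb{Z}$ and the formula holds up to sign alone, the sign reflecting that $\zeta_X(0)^\ast$ may be negative while the Euler-characteristic product is positive.

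I expect the main obstacle to be the double bookkeeping of the spectral sequences: one must verify that the regulator is genuinely compatible with the weight filtrations so that it induces a morphism of spectral sequences, that the finiteness needed to define every intermediate $\chi$ persists page by page (this is where $(\spadesuit)$ and the rank comparison on the $E^1$-page are used), and that the alternating Euler characteristic is invariant from $E^1$ to $E^\infty$. Isolating the discrepancy between the $\mathbb{Z}$- and $\mathbb{Z}[1/p]$-theories into a single clean power of $p$, and confirming that no differential contributes a spurious rational factor, is the technical heart of the argument.
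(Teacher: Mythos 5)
Your overall strategy (smooth projective base case via Kerz--Saito/Milne, then globalization through the weight machinery and multiplicativity of $\chi$) is the paper's strategy, but there is a genuine gap, and it is exactly the technical heart of the matter. You assert that ``both homology theories vanish outside a bounded range'' and you sum over ``the finite weight filtration of the abutment,'' so your alternating products $\prod_{n,j}$ implicitly run over finitely many hyperenvelope terms $Y_n$. This is unjustified: a ($\Lambda$-admissible) hyperenvelope is an unbounded simplicial scheme, so the weight spectral sequence a priori has nonzero $E^1$-columns $H_b(Y_a)$ for arbitrarily large $a$, and your products are infinite products with no a priori meaning. In characteristic $p$, without resolution of singularities, this is precisely what the paper has to prove (Theorem \ref{boundedness}): the weight complex $W(X)[1/p]$ admits a bounded representative of length $\leq d=\dim X$, established by identifying it with Bondarko's weight complex of $M^{c}(X)$ via Kelly's $l$dh-resolution theorem. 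Moreover, once one passes to that bounded representative, its terms are effective Chow motives rather than honest smooth projective schemes, so the base case must be extended to motives (zeta functions defined by the determinant formula, cycle maps compatible with projectors --- Remark \ref{extension_to_motives}); conversely, if you insist on honest schemes $Y_n$ you are stuck with the unbounded hyperenvelope. You cannot have boundedness and schemes simultaneously without the comparison theorem. For the same reason, your claim that $\chi(\mathrm{Reg}_i(X))=1$ for $i>2d$ is not free: $\mathrm{CH}_0(X,i)$ does not vanish there, it is only uniquely divisible, and proving this (together with $H^{W}_{j}(X)=0$ for $j>d$) requires the paper's induction with Gabber $l'$-alterations and a transfer argument.

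A second, smaller but real, error: your base case is false as stated, because the Kerz--Saito formula $\zeta_Y(0)^{\ast}=\prod_j(\#\mathrm{CH}_0(Y,j)_{\mathrm{tor}})^{(-1)^j}$ requires $Y$ to be geometrically irreducible, and the terms of a hyperenvelope (or of the weight complex) have no reason to be so. For $Y=\mathrm{Spec}\,\mathbb{F}_{q^2}$ over $k=\mathbb{F}_{q}$ one has $\zeta_Y(0)^{\ast}=\pm 1/2$ while every torsion group in the product is trivial; the regulator formula survives only because $\chi(\mathrm{Reg}_0(Y/k))=\#\mathrm{Cok}(\deg_{Y/k})=2$ picks up the degree of the constant field extension. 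The paper needs a separate lemma ($\zeta_{X/K}(0)^{\ast}=[K{:}k]\cdot\zeta_{X/k}(0)^{\ast}$ together with the matching identity for the $\chi$'s) to reconcile the two sides, and you would need the same lemma before feeding the $Y_n$ into your double product.
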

Here, $\chi(\mathrm{Reg}_{i}) := \#(\mathrm{Cok}(\mathrm{Reg}_{i})_{\mathrm{tor}})/\#(\mathrm{Ker}(\mathrm{Reg}_{i})_{\mathrm{tor}})$. 
Since resolution of singularities exists for any variety of dimension smaller than or equal to 3, we have the equation only up to sign for such varieties (\cite{Abh}, \cite{CP1}, \cite{CP2}).

\

By Theorem \ref{main_thm_resol_intro}, we obtain a result on special values at $s<0$.
Note that we do not need surjectivity condition in this case:

\begin{cor} (Corollary \ref{s<0_bounded})
Let $k$ be a finite field $\mathbb{F}_{q}$, $p=ch(k)$ and $X$ be a quasi-projective scheme of dimension $d$ over $k$.
Then, for any negative integer $r$, $\chi (\mathrm{Reg}_{i}(X \times \mathbb{A}^{-r}))$ is well-defined for each $i$, equals to $1$  for each $i>2(d-r)$, and the following equations hold up to sign and a power of $p$:
\[
\zeta_{X}(r)^{\ast} = \prod_{i=0}^{2(d-r)} \chi (\mathrm{Reg}_{i}(X \times \mathbb{A}^{-r}))^{(-1)^{i+1}}
=\prod_{i=0}^{2(d-r)} (\# \mathrm{CH}_{r}(X,i)_{\mathrm{tor}})^{(-1)^{i}}.
\]
Moreover, if resolution of singularities exists for any  variety of dimension $\leq d$ over $k$, then the equation holds only up to sign.
\end{cor}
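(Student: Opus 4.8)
The plan is to deduce the corollary from Theorem~\ref{main_thm_resol_intro} applied to the quasi-projective scheme $Y := X \times \mathbb{A}^{-r}$, which has dimension $d-r$. The well-definedness of each $\chi(\mathrm{Reg}_{i}(Y))$ as a positive rational number, and its equality to $1$ for $i > 2(d-r)$, are then immediate from that theorem, so the substance of the corollary is the chain of two equalities. For these I would assemble three inputs: a comparison of zeta functions, Bloch's homotopy invariance, and an \emph{unconditional} verification of $(\spadesuit)_{Y/k}$.

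First I would record the zeta identity. Counting $\mathbb{F}_{q^{m}}$-points gives $\#(X \times \mathbb{A}^{-r})(\mathbb{F}_{q^{m}}) = q^{-rm}\,\#X(\mathbb{F}_{q^{m}})$, hence
\[
Z_{X \times \mathbb{A}^{-r}}(t) = Z_{X}(q^{-r}t).
\]
Writing $u = q^{-r}t$, the point $t = q^{0} = 1$ corresponds to $u = q^{-r}$ and the uniformizer $1 - t$ corresponds to $1 - q^{r}u$; matching these natural uniformizers identifies the leading Laurent coefficients, so that $\zeta_{X \times \mathbb{A}^{-r}}(0)^{\ast} = \zeta_{X}(r)^{\ast}$. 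Thus the value at $s = r < 0$ is literally the value at $s = 0$ of $Y$, and no ambiguity beyond those already present in Theorem~\ref{main_thm_resol_intro} is introduced by this move.

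Second, Bloch's homotopy invariance $\mathrm{CH}^{n}(Z,i) \cong \mathrm{CH}^{n}(Z \times \mathbb{A}^{1},i)$, written in dimension indexing as $\mathrm{CH}_{m}(Z \times \mathbb{A}^{1},i) \cong \mathrm{CH}_{m-1}(Z,i)$ and iterated $-r$ times, gives
\[
\mathrm{CH}_{0}(Y,i) \cong \mathrm{CH}_{r}(X,i)
\]
compatibly with torsion subgroups, which explains the appearance of $\mathrm{CH}_{r}(X,i)$ on the right. Third, I would verify $(\spadesuit)_{Y/k}$ with no extra hypothesis by computing the target. Product with $\mathbb{A}^{-r}$ twists the $E^{1}$-terms of the weight spectral sequence by a single Tate factor concentrated in one degree, which I expect to force $H_{i}^{W}(Y)_{\mathbb{Q}} = 0$ for $i \geq 2$; the target of $\mathrm{Reg}_{i}(Y)_{\mathbb{Q}}$ then vanishes for $i = 2,\dots,d-r$, so its surjectivity — that is, $(\spadesuit)_{Y/k}$ — is automatic, which is the promised absence of a surjectivity hypothesis. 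The same computation should show $H_{i}^{W}(Y)$ to be, away from $p$, torsion-free and concentrated in degrees $i \leq 1$; then the torsion subgroups of $\mathrm{Ker}(\mathrm{Reg}_{i}(Y))$ and of $\mathrm{Cok}(\mathrm{Reg}_{i}(Y))$ differ from $\#\mathrm{CH}_{0}(Y,i)_{\mathrm{tor}}$ only by $p$-groups, and the alternating product of the $\chi(\mathrm{Reg}_{i}(Y))$ collapses. Feeding Theorem~\ref{main_thm_resol_intro} for $Y$ into this bookkeeping yields
\[
\zeta_{X \times \mathbb{A}^{-r}}(0)^{\ast} = \prod_{i=0}^{2(d-r)} \chi(\mathrm{Reg}_{i}(Y))^{(-1)^{i+1}} = \prod_{i=0}^{2(d-r)} (\#\mathrm{CH}_{0}(Y,i)_{\mathrm{tor}})^{(-1)^{i}}
\]
up to sign and a power of $p$, and the zeta and homotopy identities turn this into the two equalities of the corollary.

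The main obstacle will be the third step: the rational vanishing of $H_{i}^{W}(X \times \mathbb{A}^{-r})$ for $i \geq 2$ together with the $p$-integral control of its torsion. Because $H_{\ast}^{W}$ is built from $0$-cycles on smooth projective schemes via Gabber's alterations rather than from a genuine motive, no projective-bundle formula is available off the shelf; I would instead work through the weight spectral sequence directly, showing that product with $\mathbb{A}^{1}$ shifts its $E^{1}$-terms by a Tate twist annihilating the weight-$0$ graded pieces in homological degrees $\geq 2$, and then track the cokernels integrally to pin down the power of $p$. This last point is exactly the ambiguity that the resolution-of-singularities hypothesis removes: once resolution holds in dimension $\leq d$ one may take $\Lambda = \mathbb{Z}$, the power of $p$ disappears, and the equalities hold up to sign alone.
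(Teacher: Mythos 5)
Your first two steps are exactly the first two ingredients of the paper's proof: the identity $Z_{X \times \mathbb{A}^{-r}}(t) = Z_{X}(q^{-r}t)$ (the paper uses it in the form $\zeta_{X \times \mathbb{A}^{1}}(a)^{\ast} = \zeta_{X}(a-1)^{\ast}$) and Bloch's homotopy invariance $\mathrm{CH}_{0}(X \times \mathbb{A}^{-r},i) \cong \mathrm{CH}_{r}(X,i)$. The gap is in your third step, and it is twofold. First, the vanishing of weight homology is only ``expected'' from a Tate-twist heuristic that you never turn into an argument; you flag it yourself as the main obstacle, and the route you sketch would in effect require identifying $W(X \times \mathbb{A}^{-r})$ with a twist of $W(X)$, i.e.\ the motivic comparison of Theorem \ref{boundedness} together with $M^{c}(X \times \mathbb{A}^{1}) \cong M^{c}(X)(1)[2]$ --- far more machinery than is needed. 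Second, and more seriously, the statement you aim at is too weak to yield the second equality of the corollary. Rational vanishing of $H^{W}_{i}(Y)$ for $i \geq 2$ does make $(\spadesuit)_{Y/k}$ automatic, hence gives the first equality; but if $H^{W}_{0}(Y)$ or $H^{W}_{1}(Y)$ were nonzero and merely torsion-free away from $p$, as your plan allows, then $\mathrm{Cok}(\mathrm{Reg}_{i}(Y))$ for $i=0,1$ could carry arbitrary finite torsion even though the target is torsion-free (as $\mathbb{Z} \xrightarrow{2} \mathbb{Z}$ shows), so $\chi(\mathrm{Reg}_{i}(Y))$ would not reduce to $1/\#\,\mathrm{CH}_{0}(Y,i)_{\mathrm{tor}}$ in those degrees and the alternating product would not ``collapse'' to $\prod_{i}(\#\,\mathrm{CH}_{0}(Y,i)_{\mathrm{tor}})^{(-1)^{i}}$.

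What the paper proves instead --- and what your computation, carried out correctly, would actually have produced --- is the full integral vanishing $H^{W}_{i}(X \times \mathbb{A}^{1}) = 0$ for \emph{every} integer $i$, by an argument needing no bundle formula at all: apply the localization exact sequence of the homology theory $H^{W}_{\ast}$ to the closed immersion $X \times \{\infty\} \hookrightarrow X \times \mathbb{P}^{1}$ with open complement $X \times \mathbb{A}^{1}$, and observe that $H^{W}_{i}(X) \to H^{W}_{i}(X \times \mathbb{P}^{1})$ is an isomorphism because the defining functor $W = \Lambda^{\pi_{0}(-)}$ does not distinguish a smooth projective $Z$ from $Z \times \mathbb{P}^{1}$ (cross a hyperenvelope of a compactification of $X$ with $\mathbb{P}^{1}$); this is what the paper means by the identification $\mathbb{Z}^{\pi_{0}(X)} = \mathbb{Z}^{\pi_{0}(X \times \mathbb{P}^{1})}$. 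Iterating over the $-r$ affine factors kills $H^{W}_{\ast}(X \times \mathbb{A}^{-r})$ in all degrees; then every $\mathrm{Reg}_{i}(Y)$ has zero target, so $(\spadesuit)_{Y/k}$ is trivially satisfied and $\chi(\mathrm{Reg}_{i}(Y)) = 1/\#\,(\mathrm{CH}_{0}(Y,i) \otimes \Lambda)_{\mathrm{tor}}$, and both equalities follow at once from Theorem \ref{main_thm_bounded} combined with your steps one and two.
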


To understand the meaning of weight homology, let us consider an example: a smooth scheme $U$ which has a smooth compactification $U \to X$ such that the closed complement $Y=X \setminus U$ with reduced structure is a simple normal crossing divisor on $X$.
Moreover, denote by $(Y_{i})_{0 \leq i \leq N}$ an ordered set of all irreducible components of $Y$.
Since $Y$ is a simple normal crossing divisor, the disjoint union of irreducible components
\[
Y^{(a)} := \coprod_{i_{1} < \cdots < i_{a}} Y_{i_{0}} \cap \cdots \cap Y_{i_{a}}
\]
is a projective smooth scheme over $k$ for any non-negative integer $a$.
(Here, we write $Y^{(0)} := X$.)
Then the weight homology of $U$ can be calculated as the homology of the following complex:
\[
\cdots \to \Lambda^{\pi_{0}(Y^{(a+1)})} \to \Lambda^{\pi_{0}(Y^{(a)})} \to \cdots
\to \Lambda^{\pi_{0}(Y^{(1)})} \to \Lambda^{\pi_{0}(Y^{(0)})} \to 0.
\]
Each differential homomorphism
$\Lambda^{\pi_{0}(Y^{(a+1)})} \to \Lambda^{\pi_{0}(Y^{(a)})}$
is an alternating sum of the maps induced by inclusions $Y^{(a+1)} \to Y^{(a)}$.
This example shows that for non-projective smooth schemes, weight homology is an invariant which reflects the combinatoric information of the complement in the fixed compactification.

\

The regulator map $\mathrm{Reg}_{\ast} : \mathrm{CH}_{0}(X,\ast) \otimes \Lambda \to H_{\ast}^{W}(X)$  is in degree $0$ the degree homomorphism of the Chow group of zero cycles.
In fact, the restriction of the regulator $Reg|_{\mathrm{SP}_{k}}$ to the subcategory $\mathrm{SP}_{k} \subset \mathrm{Var}_{k}$ consisting of smooth projective varieties can be expressed as follows:
for any projective smooth scheme $X$ over $k$,
\[
\mathrm{Reg}_{i}(X) = \begin{cases}
		\deg_{X/k} : \mathrm{CH}_{0}(X,i) \otimes \Lambda \to \Lambda^{\pi_{0}(X)}, & (i=0)\\
		0 : \mathrm{CH}_{i}(X,i) \otimes \Lambda \to 0. & (i>0)
	\end{cases}
\]
It can be seen that the kernel of $\deg_{X/k}$ is equal to the torsion subgroup $\mathrm{CH}_{0}(X)_{\mathrm{tor}}$ of $\mathrm{CH}_{0}(X)$.

\ 

The key ingredient of the proof of Theorem \ref{main_thm_resol_intro} is a homological spectral sequence called "weight spectral sequence" for homology theories (see Remark \ref{rem_sncd}).
The $E^{1}$-terms of a weight spectral sequence consist of homology groups for smooth projective schemes.
In fact, regulator maps can be regarded as a part of a morphism between two weight spectral sequences corresponding to higher Chow groups and weight homology groups, respectively.
Thus, one can reduce the statement of Theorem \ref{main_thm_resol_intro} to the case that the scheme $X$
is smooth projective.
However, note that to calculate the alternating product in the theorem, one needs to prove that only  finitely many $E^{1}$-terms in the spectral sequences appear in the computation (otherwise, the alternating product is not well-defined).
This problem is solved by the following theorem (see Theorem \ref{boundedness}):

\begin{thm}
The weight complex (also in the meaning of Kerz-Saito) is contained in the homotopy category of bounded complexes of effective Chow motives.
\end{thm}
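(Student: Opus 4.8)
The plan is to prove boundedness by induction on $d = \dim X$, exploiting the fact that both the Gillet-Soul\'{e} construction and its Kerz-Saito refinement are, by their defining properties, functors $W \colon \mathrm{Var}_{k} \to K(\mathrm{Chow}^{\mathrm{eff}})$ which send a smooth projective scheme to its motive $M(X)$ placed in homological degree $0$ and which convert the relevant covering data into distinguished triangles in the homotopy category. So I would first isolate the structural input I need: for each $X$ a descent triangle
\[
W(E) \to W(Z) \oplus W(X') \to W(X) \xrightarrow{+1},
\]
where $X' \to X$ is the chosen covering by a smooth scheme, $Z \subset X$ is the locus over which it fails to be an isomorphism, and $E = \pi^{-1}(Z)$. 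Granting this, I claim $W(X)$ is represented by a complex concentrated in degrees $[0,d]$, and I would prove this sharper statement by induction, the base case $d=0$ being immediate since over a perfect field a reduced $0$-dimensional scheme is smooth projective, so $W(X) = M(X)$ sits in degree $0$.

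For the resolution case ($\Lambda = \mathbb{Z}$) the induction is essentially formal. I would choose a smooth compactification $X \hookrightarrow \bar{X}$ whose boundary $Y = \bar{X} \setminus X$ is a simple normal crossing divisor and read off $W(X)$ from the strata $Y^{(a)}$ exactly as in the explicit complex displayed earlier. Since $Y^{(a)}$ is smooth projective of dimension $d-a$ and is empty for $a > d$, the associated complex of motives is automatically concentrated in degrees $[0,d]$; boundedness then falls out of the combinatorics of the normal crossing configuration. Equivalently, one invokes the Guill\'{e}n-Navarro Aznar extension theorem, under which $W$ is computed from a cubical hyperresolution whose vertices in cubical degree $n$ have dimension $\leq d-n$, so that boundedness is built into the hyperresolution itself.

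The hard part is the Kerz-Saito case ($\Lambda = \mathbb{Z}[1/p]$), where resolution is replaced by Gabber's refined alteration and the covering $\pi \colon X' \to X$ is only generically finite of degree prime to $p$, and hence does \emph{not} lower the dimension. Here I cannot induct on $\dim X'$, so instead I would organize the induction around the defect locus: working with $\mathbb{Z}[1/p]$-coefficients I use the transfer relation $\pi_{\ast}\pi^{\ast} = (\deg \pi) \cdot \mathrm{id}$ to split off the contribution of the smooth projective scheme $X'$, and I push the remaining information into the closed subscheme $Z \subset X$ over which $\pi$ fails to be an isomorphism together with its preimage $E$. Both $Z$ and $E$ have dimension $< d$, so the inductive hypothesis gives $W(Z), W(E) \in K^{[0,d-1]}(\mathrm{Chow}^{\mathrm{eff}})$, and the $\mathbb{Z}[1/p]$-analog of the descent triangle above then builds $W(X)$ out of $M(X')$ in degree $0$ and objects in $K^{[0,d-1]}$, keeping it within $K^{[0,d]}$.

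The main obstacle, and the place where real work is needed, is that a single alteration only makes $X'$ smooth while leaving a nontrivial defect locus, so the construction must be iterated into a hyperenvelope $X_{\bullet} \to X$ by smooth projective schemes, and I would have to verify that this hyperenvelope can be chosen of length $\leq d$ and that it produces a complex of genuinely \emph{effective} motives within the asserted range. I expect to control this exactly as in the resolution case, by arranging that the smooth projective scheme appearing in simplicial degree $n$ has dimension $\leq d-n$, so that effectivity is preserved and the complex vanishes for $n > d$; the termination of the iterated alteration process is the delicate point. Once boundedness is established, the payoff for the application is immediate: only finitely many $E^{1}$-terms of the weight spectral sequence are nonzero, so that the alternating product in Theorem~\ref{main_thm_resol_intro} is a finite and well-defined rational expression.
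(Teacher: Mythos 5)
Your sketch for the resolution-of-singularities case is fine and agrees with the known Gillet--Soul\'e argument (it is also what the remark after the theorem in the paper says), but the positive-characteristic case --- which is the actual new content of the theorem --- has a genuine gap, and it sits exactly at the point you yourself flag as ``delicate.'' First, the descent triangle $W(E) \to W(Z) \oplus W(X') \to W(X) \xrightarrow{+1}$ exists only when $\pi \colon X' \to X$ is an isomorphism over $X \setminus Z$, i.e.\ for abstract blow-up squares. A Gabber alteration of degree $>1$ is an isomorphism over \emph{no} dense open subset (and its degree is only prime to a chosen $l \neq p$, not prime to $p$), so there is no proper closed subscheme $Z \subsetneq X$ ``over which $\pi$ fails to be an isomorphism,'' and the triangle you want to induct with does not exist. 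Second, the transfer relation $\pi_{\ast}\pi^{\ast} = (\deg \pi)\cdot \mathrm{id}$ is a property of homology theories equipped with flat pullbacks (condition (v) of Theorem \ref{thm:descent2}(b), which the paper uses for higher Chow groups and for Kato homology); the weight complex itself is functorial only for proper morphisms and open immersions, and no pullback $\pi^{\ast} \colon W(X) \to W(X')$ along a generically finite, non-flat alteration comes with the construction. Even granting such a retraction, it would only exhibit $W(X)$ as a direct summand of a bounded complex inside $K^{-}(\mathrm{Chow}^{\mathrm{eff}}(k)[\tfrac{1}{p}])$, and a retract of a bounded complex in a homotopy category need not be isomorphic to a bounded complex --- extracting an honest bounded representative from a homotopy idempotent is a nontrivial additional step.

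Third, and most seriously, your plan to arrange the hyperenvelope so that the term in simplicial degree $n$ has dimension $\leq d-n$ is precisely what fails in characteristic $p$. Hyperenvelopes are built by coskeleta: after choosing $X_{0} = X' \to X$, the next term must be an envelope of $X' \times_{X} X'$, and when $\deg \pi > 1$ this fiber product has off-diagonal components of dimension $d$ (its generic fiber has degree $(\deg\pi)^{2}$ over $k(X)$, of which the diagonal accounts for only part); iterating never lowers the dimension, so the naive construction produces an a priori unbounded complex. In the birational case the dimension drop works only because the off-diagonal part of $X_{0}\times_{X}X_{0}$ lies over the lower-dimensional non-isomorphism locus --- exactly the feature alterations lack. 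This is why the paper abandons the inductive route for $p>0$ and instead proves $W(X) \simeq t(M^{c}(X))$, where $t$ is Bondarko's weight complex functor on $\mathrm{DM}_{\mathrm{gm}}^{\mathrm{eff}}(k)[\tfrac{1}{p}]$: Kelly's results place $M^{c}(X)$ in geometric motives with $p$ inverted and identify $\Lambda$-admissible hyperenvelopes with $l$dh-hypercoverings, so that boundedness is inherited from Bondarko's theory, where weight complexes of geometric motives are bounded for structural reasons. Without that comparison, or some other genuinely new idea to terminate the alteration tower, your induction does not close.
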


If one assumes that resolution of singularities exists, the above theorem was already proved by Gillet-Soul\'{e}, hence the new contribution of mine is the case of positive characteristics. 
The proof needs some arguments comparing two kinds of weight complex functors (defined by Gillet-Soul\'{e}-Kerz-Saito and Bondarko respectively) using Kelly's motivic resolution theorem (see Theorem \ref{boundedness}).

\

Remark: in the case that $X$ is a smooth geometrically irreducible surface, Theorem \ref{main_thm_resol_intro} was essentially proved by Kato (\cite[Proposition 7.3]{K}).
Kato used homology of Milnor-Gersten complex instead of Bloch's higher Chow group to state the result, but it is known that these give the same invariant for surfaces.
On the other hand, he also constructed an invariant corresponding to weight homology and defined regulator.

Inspired by Kato's result, the author came up with the idea to reconstruct the regulator map as a morphism of homology theories.
There are two merits of using homology theory method.
One of them is that this method makes it possible to construct regulators for higher dimensional varieties.
Though Kato's construction is elementary, it is not clear that it can be extended to the case $\dim(X) \geq 3$.
The other is that any homology theory has localization exact sequences, which allow us to use induction on dimension of varieties.
By this, we can prove the first main result not only for smooth varieties but also for any singular one.
Moreover, using Gabber's refined alterations, we can drop the assumption on resolution of singularities.

\ 

Now we will list the contents of the paper.
In the first section we recall definition of homology theory, and some methods to construct homology theories.
Most of them are just the summary of the results of \cite{GS1}, \cite{Jan2} and \cite{KeS1}.

The second section is dedicated to preparing the ingredients of the statements of our main results.
We prove that Bloch's higher Chow groups provides a homology theory, construct weight homology and define regulator maps.

In the third section, we prove our main results on special values of zeta functions.
By using weight spectral sequences of homology theories (and by some modifications on the condition of geometrical irreducibility) we can deduce the results from the case that varieties are smooth projective geometrically irreducible.
In this case, the alternating product of $\chi (\mathrm{Reg}_{\ast})$, the Euler characteristics of regulator maps, is just the alternating product of the cardinalities of torsion subgroups of Bloch's higher Chow groups.
In order to show that the alternating product is well-defined, we prove a result on boundedness of weight complexes.
Boundedness of weight complex (in the sense of Gillet-Soul\'{e}) has been known under resolution of singularities, 
but not in positive characteristics, and we need a new method to prove boundedness of weight complexes in that case (see Theorem \ref{boundedness} for details).
The formula of special values via higher Chow groups in the case that the scheme $X$ is smooth projective can be reduced to Milne's famous formula via \'{e}tale cohomology groups, using cycle maps between higher Chow groups and \'{e}tale cohomology groups (cf. \cite{K}, \cite{KeS1}).

\subsection*{Acknowledgements}
I would like to express the deepest gratitude to my supervisors Professor Shuji Saito and Professor Tomohide Terasoma who provided helpful suggestions and strongly encouraged me to keep studying my theme.
Also, this work was supported by the Program for Leading Graduate Schools, MEXT, Japan.

\section{Preliminaries : homology theory and weight complex}

In this section, we recall briefly the definitions of homology theories and the descent construction of them.

\subsection{Notation and convention}

Let $k$ be a field, and $p$ be its characteristic.
For simplicity, we assume that $k$ is perfect. For example, we can take $k=\mathbb{F}_{q}$, a finite field.
Denote by $\Lambda$ either $\mathbb{Z}$ or $\mathbb{Z}[1/p]$.

\ 

Let $\mathrm{Var}_{k}$ be the category of quasi-projective schemes over the field $k$, and $(\mathrm{Var}_{k})_{\ast}$ be its subcategory consisting of the same objects as $\mathrm{Var}_{k}$ and proper morphisms.
Moreover, we denote by $\mathrm{SP}_{k}$ the full subcategory of $\mathrm{Var}_{k}$ of projective smooth schemes over $k$.

By the word "resolution of singularities," we mean the following statement:
for any reduced $X \in \mathrm{Var}_{k}$, there exists a proper birational morphism $f : X^{\prime} \to X$ over $k$ with $X^{\prime}$ smooth over $k$.

\subsection{Homology theory}

\begin{dfn}\label{def_homology_theory}
A homology theory on the category $(\mathrm{Var}_{k})_{\ast}$ is a set of covariant functors
\[
H_{i} : (\mathrm{Var}_{k})_{\ast} \longrightarrow \ (\Lambda\text{-modules}),
\] 
satisfying the following conditions:

(i) For each number $i$, $H_{i}$ is contravariant with respect to open immersions.

(ii) If $i : Y \to X$ is a closed immersion with open complement $j : U \to X$, then there exists a localisation long exact sequence which is functorial with respect to open immersions and proper morphisms:
\[
\cdots \longrightarrow H_{i}(Y) \stackrel{i_{\ast}}{\longrightarrow} H_{i}(X) \stackrel{j^{\ast}}{\longrightarrow} H_{i}(U) \longrightarrow H_{i-1}(Y) \longrightarrow \cdots.
\]
A morphism of homology theories is a set of morphisms of functors which commute with all maps appearing in the definition.
\end{dfn}

Assume that we are given a covariant functor
\[
F : \mathrm{SP}_{k} \longrightarrow (\text{complexes of }\Lambda \text{-modules}).
\]
By taking homology of complexes, we get a set of functors:
\[
H_{i}^{F} : \mathrm{SP}_{k} \longrightarrow (\Lambda \text{-modules}).
\]
Under reasonable conditions, we can get a homology theory on $(\mathrm{Var}_{k})_{\ast}$ extending $H_{\ast}^{F}$.
The following theorem gives a criterion for $H_{i}^{F}$ to extend to a homology theory.

\begin{thm}\label{thm:descent2}
(a)Assume that $F$ preserves finite coproducts, and the homology $H_{\ast}^{F}$ factors through the category of effective Chow motives $\mathrm{Chow}_{k}^{\mathrm{eff}}$.
Then, there exists an extension of $H_{\ast}^{F}$ to a homology theory on $(\mathrm{Var})_{\ast}$.
We denote the homology theory by the same notation $H_{\ast}^{F}$.
Moreover, given a morphism of functors $F \to F^{\prime}$, where $F$ and $F^{\prime}$ satisfy the condition above, then there is a canonical morphism of homology theories $H_{\ast}^{F} \to H_{\ast}^{F^{\prime}}$.

(b)Moreover, assume that the covariant functor $F$ is the restriction to the full subcategory $\mathrm{SP}_{k} \subset (\mathrm{Var}_{k})_{\ast}$ of a covariant functor
\[
\Phi : (\mathrm{Var}_{k})_{\ast} \longrightarrow (\text{complexes of }\Lambda\text{-modules}),
\]
satisfying the following conditions:

(i) $\Phi$ is contravariant with respect to flat morphisms.

(ii) If $i : Y \to X$ is a closed immersion with open complement $j : U \to X$, then the induced composition $\Phi(Y) \stackrel{i_{\ast}}{\longrightarrow} \Phi(X) \stackrel{j^{\ast}}{\longrightarrow} \Phi(U)$ is the zero map, and $\Phi(U)$ is canonically isomorphic to the cone of $i_{\ast}$.

(iii) Given a commutative diagram of schemes consists of flat morphisms and proper morphisms, the square of morphism of complexes induced by the functor $\Phi$ is commutative.

(iv) $\Phi$ preserves finite coproducts.

(v) For a finite flat morphism $f : X \to Y$ of degree $d$(i.e. $f_{\ast}\mathcal{O}_{X}$ is a free $\mathcal{O}_{Y}$ module of rank $d$), the composition $f_{\ast}f^{\ast}$ is multiplication by $d$.

Then, if $F$ factors through $\mathrm{Chow}_{k}^{\mathrm{eff}}$, the homology theory $H_{\ast}^{F}$ on $(\mathrm{Var})_{\ast}$ is canonically isomorphic to the homology of $\Phi$.
\end{thm}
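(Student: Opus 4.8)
The plan is to obtain the extension in part (a) from a \emph{weight complex functor} and to deduce part (b) by a descent and uniqueness comparison. First I would invoke the existence of a covariant (for proper maps) functor
\[
W : (\mathrm{Var}_{k})_{\ast} \longrightarrow K^{b}(\mathrm{Chow}_{k}^{\mathrm{eff}})
\]
into the bounded homotopy category of effective Chow motives, restricting on $\mathrm{SP}_{k}$ to $X \mapsto [M(X)]$ (the motive placed in degree $0$). Under resolution of singularities this is the Gillet--Soul\'{e} weight complex; in general, with $\Lambda = \mathbb{Z}[1/p]$, it is supplied by Gabber's refined alterations together with Bondarko's weight structure on Voevodsky motives and Kelly's motivic resolution. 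Because $H_{\ast}^{F}$ factors through $\mathrm{Chow}_{k}^{\mathrm{eff}}$, the functor $F$ prolongs to bounded complexes of motives, and I would simply set $H_{i}^{F}(X) := H_{i}(F(W(X)))$.

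With this definition, axiom (i) is immediate: an open immersion $j : U \to X$ induces a contravariant map $W(X) \to W(U)$, hence the required contravariant map on homology. For axiom (ii), the decisive input is that a closed immersion $i : Y \to X$ with open complement $U$ yields a distinguished triangle $W(Y) \to W(X) \to W(U) \to W(Y)[1]$ in $K^{b}(\mathrm{Chow}_{k}^{\mathrm{eff}})$; this localization triangle is part of the construction of $W$, as it encodes the abstract blow-up relations of the underlying hyperresolution. Applying the exact functor induced by $F$ gives a triangle in the derived category of $\Lambda$-modules, whose long exact homology sequence is precisely the localization sequence. Functoriality in open immersions and proper morphisms, and the statement about morphisms $F \to F'$, then follow formally from the functoriality of $W$ and the naturality of these triangles.

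For part (b) I would prove that $\Phi$ satisfies proper descent against the same alterations used to build $W$, and then invoke uniqueness. Axiom (ii) says each localization square is sent by $\Phi$ to a cone, i.e. a distinguished triangle; together with flat contravariance (i), compatibility (iii), and multiplicativity (v), this suffices to show that $\Phi$ carries an abstract blow-up square to a distinguished triangle --- here axiom (v) is exactly the trace argument that, after inverting $p$, cancels the $p$-power multiplicity introduced by an alteration of $X$. Consequently, for a cubical hyperresolution (or system of alterations) $X_{\bullet} \to X$ by smooth projective schemes one obtains a quasi-isomorphism $\Phi(X) \simeq \mathrm{Tot}(\Phi(X_{\bullet}))$. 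Since $\Phi$ restricts to $F$ on $\mathrm{SP}_{k}$ and $W(X)$ is computed from the very same hyperresolution, the right-hand side equals $F(W(X))$, giving a canonical quasi-isomorphism $\Phi(X) \simeq F(W(X))$ and hence the asserted isomorphism on homology; tracking the descent maps shows it is compatible with the structure morphisms.

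The main obstacle is the descent step in positive characteristic. Where resolution of singularities is available the weight complex and its localization triangles are classical (Gillet--Soul\'{e}, Jannsen), but in general one must replace hyperresolutions by Gabber's refined alterations, whose degrees are powers of $p$. Controlling these multiplicities --- so that the descent complex remains well-defined and independent of choices up to homotopy --- is exactly what forces $\Lambda = \mathbb{Z}[1/p]$ and is the technical heart of both parts; axiom (v) is included precisely to make the trace computation go through in part (b).
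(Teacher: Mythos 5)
Your plan founders at its very first step: the definition $H_{i}^{F}(X) := H_{i}(F(W(X)))$ is not well-formed. The hypothesis of part (a) is only that the \emph{homology} $H_{\ast}^{F}$ factors through $\mathrm{Chow}_{k}^{\mathrm{eff}}$; it does not say, and one cannot deduce, that the chain-level functor $F$ itself factors through Chow motives. The weight complex $W(X)$ is an object of a homotopy category whose terms are direct summands of motives of smooth projective schemes and whose differentials are correspondences, so there is no way to ``apply $F$'' to it: $F$ is defined only on the category $\mathrm{SP}_{k}$ of actual schemes and scheme morphisms, with values in complexes. What the motive-level factorization gives you is, for each fixed $b$, a complex of $\Lambda$-modules $a \mapsto H_{b}^{F}(X_{a})$ (an $E^{1}$-page); but the groups you want to define are the abutment of a spectral sequence, and an abutment is not determined by its $E^{1}$-page --- you need an honest double complex of $\Lambda$-modules underlying it, i.e.\ chain-level data that $W(X)$ does not provide.

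This is exactly why the paper's construction (following Gillet--Soul\'{e} and Kerz--Saito) works with hyperenvelopes rather than with the weight complex: one chooses a compactification $X \subset \overline{X}$ with complement $Y$, compatible hyperenvelopes $Y_{\bullet} \to Y$ and $\overline{X}_{\bullet} \to \overline{X}$ (simplicial smooth \emph{projective schemes}, whose face maps are genuine morphisms), applies $F$ \emph{termwise} to obtain a morphism of double complexes of $\Lambda$-modules, and defines $H_{\ast}^{F}(X)$ as the homology of the cone of $\mathrm{tot}\,F(Y_{\bullet}) \to \mathrm{tot}\,F(\overline{X}_{\bullet})$. The factorization of $H_{\ast}^{F}$ through Chow motives enters only afterwards, to prove independence of choices and functoriality: two hyperenvelopes are dominated by a third, a map of hyperenvelopes induces a map of spectral sequences whose $E^{1}$-terms are $H_{b}^{F}$ of the simplicial components, and the Gillet--Soul\'{e} homotopy equivalence of the associated complexes of Chow motives forces an isomorphism from $E^{2}$ onward, hence on abutments. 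If you repair your definition by insisting on a representative of $W(X)$ by actual schemes and actual morphisms, you are forced back onto precisely this construction. Two smaller points: boundedness of the weight complex (landing in $K^{b}$ rather than $K^{+}$) is not available for free --- in positive characteristic it is the paper's Theorem \ref{boundedness}, proved via Bondarko and Kelly, and it is not needed for part (a); and for part (b) the paper cites Construction 3.6 and Theorem 4.1 of \cite{KeS2}, whose argument does run along the trace-and-alteration lines you sketch, but again with $\Phi$ and $F$ evaluated termwise on hyperenvelopes, never on motives.
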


\begin{proof}
For (a), We briefly recall only the construction of the homology theory.
For more details, see \cite{KeS2}.
For any $X \in \mathrm{Var}_{k}$, there exists an open immersion $j : X \to \overline{X}$ to a projective scheme $\overline{X}$ with closed complement $i : Y \to \overline{X}$.
There exists a commutative diagram of simplicial schemes:
\[
\begin{CD}
Y_{\bullet} @> \tilde{i} >> \overline{X}_{\bullet} \\
@V f VV @V g VV \\
Y @> j >> \overline{X},
\end{CD}
\]
where $f$ and $g$ are ($\Lambda$-admissible) hiperenvelopes
(here, $Y$ and $\overline{X}$ are regarded as constant simplicial schemes).
Since $Y$ and $\overline{X}$ are projective, $Y_{\bullet}$ and $\overline{X}_{\bullet}$ are projective as simplicial schemes.
So, we get a morphism of complexes $\Phi(Y_{\bullet}) \stackrel{\tilde{i}}{\longrightarrow} \Phi(\overline{X}_{\bullet})$
(differentials with respect to $\bullet$ is induced by the alternating sum of proper face maps of simplicial schemes).
If we denoby by $\mathrm{tot}C$ the total complex of a homological double complex $C$, we define the homology theory to be the homology of the cone of the induced morphism of complexes $\mathrm{tot}\Phi(Y_{\bullet}) \stackrel{\tilde{i}}{\longrightarrow} \mathrm{tot}\Phi(\overline{X}_{\bullet})$

For (b), see Construction 3.6 and Theorem 4.1 in \cite{KeS2}.
\end{proof}

\begin{rem}
Let $X$ be quasi-projective and $\overline{X}$ its projective compactification with closed complement $Y = \overline{X} \setminus X$.
Let $Y_{\bullet}$ and $\overline{X}_{\bullet}$ be hyperenvelopes of $Y$ and $\overline{X}$ respectively, and $Y_{\bullet} \to \overline{X}_{\bullet}$ a morphism of simplicial scheme compatible with the closed immersion $Y \to \overline{X}$.
Regard $Y_{\bullet}$ and $\overline{X}_{\bullet}$ as homological complexes of effective chow motives as in \cite[Section 2]{GS1}, whose differentials are alternating sums of face maps of simplicial schemes.
Then one can prove that the cone $\mathrm{cone}(Y_{\bullet} \to \overline{X}_{\bullet})$ is independent of the choices of hyperenvelopes up to canonical isomorphism in the homotopy category $K^{+}(\mathrm{Chow}^{\mathrm{eff}}(k)[\frac{1}{p}])$ of effective chow motives.
We denote the class of the complex by $W(X)$, which we call the weight complex of $X$.
This notation will be used in the third section.
\end{rem}

\begin{rem}\label{rem_sncd}
Let $X$, $\overline{X}$, $Y$, $Y_{\bullet}$, $\overline{X}_{\bullet}$ as above.
If $(X_{a})_{a \in \mathbb{Z}}$ is a complex of smooth projective schemes which represents the weight complex $W(X)$, then there exists a convergent homological spectral sequence
\[
H_{b}(X_{a}) \Longrightarrow H_{a+b}(X).
\]
for any homology theory $H_{\ast}$ such that $H_{\ast}$ is constructed by Theorem \ref{thm:descent2}(a).

If $X$ is smooth and $Y$ is a simple normal crossing divisor on $\overline{X}$ with the set of irreducible components $\{ Y_{j} \}_{j \in J}$, let $Y^{(0)} = \overline{X}$, and $Y^{(r)} = \coprod_{j_{0} < \cdots < j_{r}} Y_{j_{0}} \cap \cdots \cap Y_{j_{r}}$.
By the argument of \cite[Proposition 3]{GS1}, the above spectral sequence can be rewritten as follows:
\[
H_{b}(Y^{(a)}) \Longrightarrow H_{a+b}(X)
\]
\end{rem}

\section{Construction of regulator}

In this section, we construct regulator maps connecting Bloch's higher Chow groups and weight homology groups, which we will define below.

\ 

The idea is to regard higher Chow groups as a homology theory in the sense of the previous section.

\subsection{Weight homology theory}

\begin{dfn}
Define a covariant functor
\[
W : \mathrm{SP}_{k} \longrightarrow (\text{complexes of }\Lambda \text{-modules})
\]
by the following formula:
\[
W(X) := \Lambda^{\pi_{0}(X)}[0]
\]
where $\pi_{0}(X)$ denotes the set of connected components of $X$.
Since $\Lambda^{\pi_{0}(X)} \simeq \mathrm{Hom}_{\mathbb{Z}}(\mathrm{CH}^{0}(X), \Lambda)$, the functor $W$ factors through $\mathrm{Chow}_{k}^{\mathrm{eff}}$.
By Theorem \ref{thm:descent2}, we can define a homology theory on $(\mathrm{Var}_{k})_{\ast}$ by extending $W$.
We denote this by $H^{W}$ and call it weight homology theory.
\end{dfn}

\subsection{Descent of higher Chow groups}

\begin{prop}\label{prop:chow_is_hom_theory}
For any non-negative integer $r \geq 0$, let $z_{r}(-, *)$ be the $r$-cycle complex, whose homology groups are defined to be higher Chow groups.
Then, the covariant functor
\[
z_{r}(-,\ast) \otimes_{\mathbb{Z}} \Lambda : (\mathrm{Var}_{k})_{\ast} \longrightarrow (\text{homological complexes of }\Lambda \text{-modules})
\]
satisfies the conditions of Theorem \ref{thm:descent2}(b).
\end{prop}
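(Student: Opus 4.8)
**The plan is to verify each of the five axioms (i)–(v) of Theorem \ref{thm:descent2}(b) for the functor $\Phi = z_{r}(-,\ast)\otimes_{\mathbb{Z}}\Lambda$.** Most of these are structural properties of Bloch's cycle complex that are either standard or follow from Bloch's foundational work, so the bulk of the proof will be a matter of citing and assembling known facts rather than proving anything from scratch. I would organize the argument axiom by axiom, treating the localization property (ii) as the technical heart.

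First I would check functoriality. The cycle complex $z_{r}(-,\ast)$ is covariant for proper morphisms via proper pushforward of cycles (which preserves dimension, hence sends $r$-cycles to $r$-cycles), giving a covariant functor on $(\mathrm{Var}_{k})_{\ast}$. For (i), contravariance along flat morphisms comes from flat pullback of cycles, and one must recall that flat pullback on $z_{r}$ shifts the dimension index appropriately; since we fix the absolute dimension $r$, I would note the convention used so that flat pullback lands in the correct complex. The compatibility (iii) of pushforward and pullback in a commutative square of flat and proper maps is the usual base-change/projection formula for cycles, and (iv), preservation of finite coproducts, is immediate since cycles on a disjoint union decompose as a direct sum over the components. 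For (v), the identity $f_{\ast}f^{\ast}=\cdot d$ for a finite flat $f$ of degree $d$ is the standard degree formula for cycles, compatible with the simplicial structure of the higher cycle complex.

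\textbf{The main obstacle is axiom (ii), the localization property.} Here one must show that for a closed immersion $i\colon Y\to X$ with open complement $j\colon U\to X$, the composite $z_{r}(Y,\ast)\to z_{r}(X,\ast)\to z_{r}(U,\ast)$ is zero and that $z_{r}(U,\ast)\otimes\Lambda$ is canonically quasi-isomorphic to the cone of $i_{\ast}$. That the composite vanishes is clear on cycles: a cycle supported on $Y$ restricts to zero on $U$. The substantive input is that the sequence $z_{r}(Y,\ast)\to z_{r}(X,\ast)\to z_{r}(U,\ast)$ is a distinguished triangle up to quasi-isomorphism — equivalently, that $z_{r}(U,\ast)$ computes the cone. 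This is precisely \emph{Bloch's localization theorem} for higher Chow groups, whose proof is genuinely hard (the original argument involves a delicate moving lemma), and I would invoke it as a cited black box rather than reprove it. The one point requiring care is that the localization theorem is usually stated as giving a long exact sequence of higher Chow \emph{groups}, whereas (ii) demands the stronger statement that the cone is realized \emph{canonically} at the level of complexes; I would address this by using the explicit quasi-isomorphism furnished by Bloch–Levine and remarking that tensoring with the flat $\mathbb{Z}$-algebra $\Lambda$ preserves this quasi-isomorphism.

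Finally, I would note that verifying the hypotheses of part (b) is itself the goal: the proposition does not assert that $z_{r}$ factors through $\mathrm{Chow}_{k}^{\mathrm{eff}}$ (that factorization, needed to actually produce the homology theory, would be argued separately, e.g. via the degree map in degree zero), so the statement to be proven is only that the five conditions (i)–(v) hold. Thus the proof reduces to the checklist above, with Bloch's localization theorem doing all the real work.
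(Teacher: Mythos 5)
Your proposal is correct and takes essentially the same approach as the paper's own proof: both verify the axioms (i)--(v) as a checklist, citing Bloch's localization theorem as the one substantive input for (ii), flat pullback functoriality for (i), the base-change compatibility of proper pushforward and flat pullback for (iii), triviality for (iv), and the projection/degree formula for (v). Your additional care about realizing the cone canonically at the level of complexes and about the dimension-indexing convention for flat pullback refines points the paper leaves implicit, but does not alter the route.
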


\begin{proof}
Condition (i) is a fundamental functorial property of the cycle complex.
Condition (ii) is the localisation property proved by Bloch (\cite{Bl}).
Condition (iii) follows immediately from the definition of cycle complexes and the base change formula (the compatibility of proper push-outs and flat pull-backs) of algebraic cycles.
Condition (iv) is trivial, and (v) is the projection formula.
\end{proof}

\ 

\subsection{Regulator}

In the following, we understand the word "complex" as "homological complex."
Let $i$ be an integer, and let 
\[
\tau_{\leq i}: (\text{complexes of } \Lambda \text{-modules}) \to (\text{complexes of } \Lambda \text{-modules})
\]
be the truncation functor defined by
\[
\tau_{\leq i}(M_{\ast}) = [\cdots 0 \to M_{i}/\mathrm{Im}(d_{i+1}) \stackrel{d_{i}}{\longrightarrow} M_{i-1} \stackrel{d_{i-1}}{\longrightarrow} M_{i-2} \stackrel{d_{i-2}}{\longrightarrow} \cdots],
\]
where $M_{\ast}$ is a complex of $\Lambda$-modules.

\begin{cnst}\label{def_regulator_maps}
\upshape

Let
\[
z_{0}(-,\ast) \otimes_{\mathbb{Z}} \Lambda |_{\mathrm{SP}_{k}}
\longrightarrow
\tau_{\leq 0}(z_{0}(-,\ast) \otimes_{\mathbb{Z}} \Lambda)
\simeq
(\mathrm{CH}_{0}(-) \otimes_{\mathbb{Z}} \Lambda)[0]
\]
be the natural transformation of functors induced by truncation.

Let
\[
(\mathrm{CH}_{0}(-) \otimes_{\mathbb{Z}} \Lambda)[0]
\stackrel{\mathrm{deg}}{\longrightarrow}
\Lambda ^{\pi_{0}(-)}[0] = W
\]
be the morphism of functors induced by the degree homomorphisms associated to Chow groups of zero cycles of proper schemes.

Consider the composition of above morphisms:
\[
z_{0}(-,\ast) \otimes_{\mathbb{Z}} \Lambda |_{\mathrm{SP}_{k}}
\longrightarrow
(\mathrm{CH}_{0}(-) \otimes_{\mathbb{Z}} \Lambda)[0]
\longrightarrow
W.
\]
Homologies of both sides of the morphism extends to homology theories on $(\mathrm{Var}_{k})_{\ast}$.
By Theorem \ref{prop:chow_is_hom_theory}, the homology theory associated to the left hand side is $\mathrm{CH}_{0}(-,\ast)$.
So, we get a morphism of homology theories:
\[
\mathrm{Reg}_{\ast}
:
\mathrm{CH}_{0}(-,\ast) \otimes_{\mathbb{Z}} \Lambda
\longrightarrow
H_{\ast}^{W}
\]
which we call the regulator map.
\end{cnst}

\section{Special values of zeta functions at $s=0$}

In this section, we give a formula to calculate $\zeta_{X}(0)^{\ast}$ for any variety $X$ over a finite field, by using regulators defined in the previous section.

\ 

Throughout this section, let $k=\mathbb{F}_{q}$, the finite field with $q$ elements.

\subsection{Finiteness condition}

\begin{dfn}
Let $f : A \to B$ be a homomorphism of abelian groups.
Denote by $\mathrm{FQ}$ the full subcategory of the category of abelian groups consisting of objects isomorphic to groups of the following form:
\[
(\text{finite group}) \oplus (\text{uniquely divisible group}).
\]
Assume that $\mathrm{Ker}(f)$ and $\mathrm{Coker}(f)$ are objects of $\mathrm{FQ}$.
Then, we set
\[
\chi (f) := \# \mathrm{Coker}(f)_{\mathrm{tor}} / \# \mathrm{Ker}(f)_{\mathrm{tor}}.
\]
\end{dfn}

Note that $\chi$ is multiplicative with respect to short exact sequences:

\begin{lem}\label{multiplicativity}

Assume that

\[
\begin{CD}
0 @>>> A @>>> B @>>> C @>>> 0 \\
& & @V f VV @V g VV @V h VV \\
0 @>>> A^{\prime} @>>> B^{\prime} @>>> C^{\prime} @>>> 0
\end{CD}
\]

\ 

is a commutative diagram of abelian groups satisfying that the rows are short exact sequences.
If kernels and cokernels of the vertical arrows $f, h$ are objects of $\mathrm{FQ}$, then the kernel and cokernel of $g$ also belong to $\mathrm{FQ}$, and the following equation holds:
\[
\chi(f) \cdot \chi(g)^{-1} \cdot \chi(h) = 1.
\]
\end{lem}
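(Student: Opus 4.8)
The plan is to use the snake lemma to relate the kernels and cokernels of $f$, $g$, $h$ in a long exact sequence, and then to prove multiplicativity of $\chi$ along this sequence. First I would apply the snake lemma to the given commutative diagram with exact rows. This produces a six-term exact sequence
\[
0 \to \mathrm{Ker}(f) \to \mathrm{Ker}(g) \to \mathrm{Ker}(h) \to \mathrm{Coker}(f) \to \mathrm{Coker}(g) \to \mathrm{Coker}(h) \to 0.
\]
By hypothesis the four outer terms $\mathrm{Ker}(f)$, $\mathrm{Ker}(h)$, $\mathrm{Coker}(f)$, $\mathrm{Coker}(h)$ belong to $\mathrm{FQ}$. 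I would first verify that $\mathrm{Ker}(g)$ and $\mathrm{Coker}(g)$ also lie in $\mathrm{FQ}$, which requires checking that $\mathrm{FQ}$ is closed under the relevant extensions and subquotients appearing here; concretely, a group fits in a short exact sequence whose two ends are in $\mathrm{FQ}$ is again in $\mathrm{FQ}$, since an extension of a (finite $\oplus$ divisible) by another (finite $\oplus$ divisible) decomposes analogously (the divisible part splits off as an injective $\mathbb{Z}$-module, and uniquely divisible means torsion-free divisible, so it is a $\mathbb{Q}$-vector space). This closure property is the first thing to pin down.

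Once all six terms of the snake sequence are known to lie in $\mathrm{FQ}$, the remaining task is purely a computation with the multiplicative invariant $\#(-)_{\mathrm{tor}}$. The key observation is that for a short exact sequence $0 \to A \to B \to C \to 0$ of objects of $\mathrm{FQ}$, the torsion orders satisfy $\#B_{\mathrm{tor}} = \#A_{\mathrm{tor}} \cdot \#C_{\mathrm{tor}}$. This follows because the uniquely divisible (torsion-free) parts contribute nothing to torsion, and on the finite parts one has the usual multiplicativity of cardinality in a short exact sequence of finite abelian groups. Granting this, I would break the six-term exact sequence into short exact sequences by inserting the images of consecutive maps, or equivalently use that for any bounded exact sequence of $\mathrm{FQ}$-objects the alternating product of torsion orders equals $1$. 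Taking the alternating product of $\#(-)_{\mathrm{tor}}$ over the six terms gives
\[
\frac{\#\mathrm{Ker}(g)_{\mathrm{tor}} \cdot \#\mathrm{Coker}(f)_{\mathrm{tor}} \cdot \#\mathrm{Coker}(h)_{\mathrm{tor}}}{\#\mathrm{Ker}(f)_{\mathrm{tor}} \cdot \#\mathrm{Ker}(h)_{\mathrm{tor}} \cdot \#\mathrm{Coker}(g)_{\mathrm{tor}}} = 1.
\]
Rearranging this identity and recalling that $\chi(f) = \#\mathrm{Coker}(f)_{\mathrm{tor}}/\#\mathrm{Ker}(f)_{\mathrm{tor}}$ (and similarly for $g$, $h$) yields exactly $\chi(f) \cdot \chi(g)^{-1} \cdot \chi(h) = 1$.

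I expect the main obstacle to be the bookkeeping of the structural claims about $\mathrm{FQ}$ rather than the final arithmetic. Specifically, establishing that $\mathrm{FQ}$ is closed under extensions requires the splitting of a divisible subgroup and an argument that the quotient of an $\mathrm{FQ}$-object by a finite subgroup, or a subobject of an $\mathrm{FQ}$-object, stays in $\mathrm{FQ}$; one must be careful that ``uniquely divisible'' is exactly ``torsion-free and divisible'' so that these pieces are $\mathbb{Q}$-vector spaces and hence both injective and projective relative to the torsion-free/torsion dichotomy. Once that closure is secured, decomposing the snake sequence into short exact sequences and applying the multiplicativity of $\#(-)_{\mathrm{tor}}$ is routine. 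I would therefore organize the proof as: (1) snake lemma, (2) closure of $\mathrm{FQ}$ under the sub/quotient/extension operations arising in the sequence, (3) multiplicativity of the torsion order over short exact sequences of $\mathrm{FQ}$-objects, and (4) the final alternating-product computation.
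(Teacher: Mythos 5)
Your overall route---snake lemma, structural closure properties of $\mathrm{FQ}$, multiplicativity of $\#(-)_{\mathrm{tor}}$ on short exact sequences in $\mathrm{FQ}$, and the alternating product over the six-term sequence---is exactly the paper's: its proof reads ``by the snake lemma, the statement is reduced to the next lemma,'' where that next lemma asserts that $\mathrm{FQ}$ is closed under kernels and cokernels of morphisms \emph{in} $\mathrm{FQ}$ and under extensions, and that $A \mapsto A_{\mathrm{tor}}$ is an exact functor from $\mathrm{FQ}$ to finite groups.

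However, one structural claim you propose to ``pin down'' is false as stated: $\mathrm{FQ}$ is \emph{not} closed under passing to arbitrary subobjects. For example, $\mathbb{Z} \subset \mathbb{Q}$, and $\mathbb{Z}$ is not of the form $(\text{finite}) \oplus (\text{uniquely divisible})$. So if you literally tried to prove ``a subobject of an $\mathrm{FQ}$-object stays in $\mathrm{FQ}$,'' that step would fail. Fortunately you never need it: the subgroups that actually occur when you cut the snake sequence into short exact sequences are images, and by exactness each image is the kernel of the \emph{next} map, i.e.\ the kernel of a morphism between two $\mathrm{FQ}$-objects; for instance
\[
\mathrm{Im}\bigl(\mathrm{Ker}(g) \to \mathrm{Ker}(h)\bigr) = \mathrm{Ker}\bigl(\mathrm{Ker}(h) \to \mathrm{Coker}(f)\bigr).
\]
Closure under kernels and cokernels of morphisms between $\mathrm{FQ}$-objects is true and easy: since there are no nonzero homomorphisms from a uniquely divisible group to a finite group, nor from a finite group to a uniquely divisible (hence torsion-free) group, any such morphism decomposes as a map of finite parts plus a $\mathbb{Q}$-linear map of uniquely divisible parts, and both pieces have kernels and cokernels in $\mathrm{FQ}$. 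Replacing your subobject claim by this statement---which is precisely clause (i) of the paper's auxiliary lemma---the rest of your plan (extension closure, exactness of torsion, and the final alternating-product computation) goes through and reproduces the paper's proof.
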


\begin{proof}
By the snake lemma, the statement is reduced to the next lemma.
\end{proof}

\begin{lem}
The category $\mathrm{FQ}$ is closed under the following operations:

(i)taking kernels and cokernels of morphisms in $\mathrm{FQ}$,

(ii)taking extension of groups.

Moreover, the functor
\[
\mathrm{FQ} \longrightarrow (\text{finite groups})
\]
sending $A$ to $A_{\mathrm{tor}}$ is an exact functor.
\end{lem}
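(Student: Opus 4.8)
The plan is to reduce everything to a single \emph{canonical decomposition} of the objects of $\mathrm{FQ}$. First I would record the intrinsic characterization: an abelian group $A$ lies in $\mathrm{FQ}$ if and only if $A_{\mathrm{tor}}$ is finite and $A/A_{\mathrm{tor}}$ is a $\mathbb{Q}$-vector space. The forward implication is immediate from the defining isomorphism $A \cong (\text{finite}) \oplus (\text{uniquely divisible})$, since a uniquely divisible group is exactly a torsion-free $\mathbb{Q}$-vector space. For the converse I would split the sequence $0 \to A_{\mathrm{tor}} \to A \to A/A_{\mathrm{tor}} \to 0$ by showing $\mathrm{Ext}^{1}_{\mathbb{Z}}(A/A_{\mathrm{tor}}, A_{\mathrm{tor}}) = 0$: writing $N$ for the exponent of the finite group $A_{\mathrm{tor}}$, multiplication by $N$ kills this $\mathrm{Ext}$ group, while it is also an automorphism of it because multiplication by $N$ is an automorphism of the $\mathbb{Q}$-vector space $A/A_{\mathrm{tor}}$; being simultaneously zero and bijective, the group vanishes, so $A \cong A_{\mathrm{tor}} \oplus A/A_{\mathrm{tor}} \in \mathrm{FQ}$. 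Finally I would note that the $\mathbb{Q}$-vector space complement is canonical: it is the maximal divisible subgroup $D(A)$, so that $A = A_{\mathrm{tor}} \oplus D(A)$ functorially.

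Next I would observe that every morphism $f \colon A \to B$ in $\mathrm{FQ}$ is \emph{block diagonal} for these canonical decompositions. Indeed $f(A_{\mathrm{tor}}) \subseteq B_{\mathrm{tor}}$, and $f(D(A)) \subseteq D(B)$ because the image of a divisible group is divisible; there are no off-diagonal components since $\mathrm{Hom}(\text{finite}, \text{torsion-free}) = 0$ and $\mathrm{Hom}(\text{divisible}, \text{finite}) = 0$. Thus $f = f_{\mathrm{tor}} \oplus f_{D}$ with $f_{\mathrm{tor}}$ a homomorphism of finite groups and $f_{D}$ a $\mathbb{Q}$-linear map. Claim (i) then follows at once: kernels and cokernels are formed componentwise, and finite abelian groups and $\mathbb{Q}$-vector spaces are each closed under kernels and cokernels. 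Equivalently, $A \mapsto (A_{\mathrm{tor}}, D(A))$ identifies $\mathrm{FQ}$ with the product of the category of finite abelian groups and that of $\mathbb{Q}$-vector spaces, under which $A \mapsto A_{\mathrm{tor}}$ is projection to the first factor.

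For (ii) the decomposition of the middle term is exactly what must be produced, so I would argue directly on a short exact sequence $0 \to A \to B \to C \to 0$ with $A, C \in \mathrm{FQ}$. Finiteness of $B_{\mathrm{tor}}$ is clear, as $A_{\mathrm{tor}} \hookrightarrow B_{\mathrm{tor}}$ with $B_{\mathrm{tor}}/A_{\mathrm{tor}}$ embedded in the finite group $C_{\mathrm{tor}}$. The crucial step is surjectivity of $B_{\mathrm{tor}} \to C_{\mathrm{tor}}$: given $c \in C_{\mathrm{tor}}$ with $mc = 0$, I lift it to $b \in B$, note $mb \in A$, and use divisibility of $A/A_{\mathrm{tor}}$ to find $a' \in A$ with $m(b - a') \in A_{\mathrm{tor}}$, so that $b - a'$ is torsion and still lifts $c$. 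Applying the snake lemma to the inclusion of the torsion sequence $0 \to A_{\mathrm{tor}} \to B_{\mathrm{tor}} \to C_{\mathrm{tor}} \to 0$ into the given one yields an exact sequence $0 \to A/A_{\mathrm{tor}} \to B/B_{\mathrm{tor}} \to C/C_{\mathrm{tor}} \to 0$ whose outer terms are $\mathbb{Q}$-vector spaces; since an extension of a torsion-free divisible group by a torsion-free divisible group is again torsion-free divisible, $B/B_{\mathrm{tor}}$ is a $\mathbb{Q}$-vector space, and by the characterization $B \in \mathrm{FQ}$. The same torsion sequence shows that $A \mapsto A_{\mathrm{tor}}$ carries short exact sequences in $\mathrm{FQ}$ to short exact sequences, which together with its automatic left exactness gives the final exactness assertion.

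The only genuinely non-formal input, and the step I expect to require the real idea, is the surjectivity of $B_{\mathrm{tor}} \to C_{\mathrm{tor}}$ — equivalently, the right exactness of the torsion functor on $\mathrm{FQ}$. This fails for arbitrary abelian groups and is precisely where the divisibility of $A/A_{\mathrm{tor}}$ is used; the auxiliary $\mathrm{Ext}$-vanishing that splits $A_{\mathrm{tor}}$ off is the other small computation, and everything else is bookkeeping once the canonical decomposition $A = A_{\mathrm{tor}} \oplus D(A)$ and its functoriality are in place.
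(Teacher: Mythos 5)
Your proof is correct, and it takes a genuinely different route from the paper's. The paper declares (i) and the exactness of $(-)_{\mathrm{tor}}$ trivial and concentrates on (ii): it reduces ``without loss of generality'' to the case where the kernel $M_{1}$ is finite and the quotient $M_{3}$ is uniquely divisible, and then splits that extension --- the same computation $\mathrm{Ext}^{1}_{\mathbb{Z}}(\text{$\mathbb{Q}$-vector space}, \text{finite group}) = 0$ that you carry out. Your argument instead proves an intrinsic characterization ($A \in \mathrm{FQ}$ if and only if $A_{\mathrm{tor}}$ is finite and $A/A_{\mathrm{tor}}$ is a $\mathbb{Q}$-vector space) together with the canonical, functorial splitting $A = A_{\mathrm{tor}} \oplus D(A)$, and then treats an arbitrary extension head-on, by establishing exactness of $0 \to A_{\mathrm{tor}} \to B_{\mathrm{tor}} \to C_{\mathrm{tor}} \to 0$ (your divisibility argument for the surjectivity of $B_{\mathrm{tor}} \to C_{\mathrm{tor}}$ is the one non-formal step) and passing to the quotient sequence of $\mathbb{Q}$-vector spaces. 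This buys two things. First, you never need the paper's reduction, which is less innocuous than ``WLOG'' suggests: a general extension of objects of $\mathrm{FQ}$ does \emph{not} split (only the mixed $\mathrm{Ext}$ components vanish; the $\mathrm{Ext}^{1}$ between the finite parts survives), so justifying the reduction requires splitting off divisible subgroups as injective objects and pulling back along the divisible part of $M_{3}$ --- a chain of small arguments comparable in weight to the special case it reduces to. Second, your identification of $\mathrm{FQ}$ with the product of the category of finite abelian groups and the category of $\mathbb{Q}$-vector spaces delivers (i) and the exactness of the torsion functor in one stroke, i.e.\ precisely the assertions the paper waves off as trivial. The only cost is length: the paper's proof, granting its exercises, is three lines.
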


\begin{proof}
The only nontrivial statement is that $\mathrm{FQ}$ is closed under taking extension of groups.
Let $0 \to M_{1} \to M_{2} \to M_{3} \to 0$ be a short exact sequence of abelian groups such that $M_{1}$ and $M_{3}$ belong to $\mathrm{FQ}$.
Without loss of generality, we can assume that $M_{1}$ is finite and $M_{3}$ is uniquely divisible.
An easy exercise of homological algebra shows that this sequence splits, hence the statement follows.
\end{proof}

\begin{prop}\label{FQ}
When $\Lambda \neq \mathbb{Z}$, assume resolution of singularities in $dim \leq d$.
If $X$ is a proper smooth scheme of dimension $d$ over a finite field $\mathbb{F}_{q}$,
then $\mathrm{CH}_{0}(X,i) \otimes \Lambda$ is an object of $\mathrm{FQ}$ for each $i>0$,
and is uniquely divisible for each $i>2\dim (X)$.
Moreover, the torsion subgroup $(\mathrm{CH}_{0}(X) \otimes \Lambda)_{\mathrm{tor}}$ of $\mathrm{CH}_{0}(X) \otimes \Lambda$ is finite.
\end{prop}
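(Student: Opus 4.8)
The plan is to deduce membership in $\mathrm{FQ}$ from the splitting property established just above: a $\Lambda$-module $A$ lies in $\mathrm{FQ}$ as soon as $A_{\mathrm{tor}}$ is finite and the torsion-free quotient $A/A_{\mathrm{tor}}$ is uniquely divisible, since the resulting extension of a uniquely divisible group by a finite one splits. Since $X$ is smooth projective of dimension $d$, I would first rewrite $\mathrm{CH}_0(X,i)=\mathrm{CH}^d(X,i)=H^{2d-i}_M(X,\mathbb{Z}(d))$, turning all three assertions into statements about integral motivic cohomology in weight $d$. The argument then runs one prime $\ell$ at a time through the universal coefficient (Bockstein) sequences
\[
0 \to H^{2d-i}_M(X,\mathbb{Z}(d)) \otimes \mathbb{Z}/\ell^k \to H^{2d-i}_M(X,\mathbb{Z}/\ell^k(d)) \to H^{2d-i+1}_M(X,\mathbb{Z}(d))[\ell^k] \to 0,
\]
whose middle terms are finite by the finiteness of motivic cohomology with finite coefficients for smooth projective $X/\mathbb{F}_q$ (Colliot-Th\'el\`ene--Sansuc--Soul\'e together with the Beilinson--Lichtenbaum and Geisser--Levine theorems).

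For $\ell\neq p$ the key extra input is the Weil conjectures. Using the Beilinson--Lichtenbaum comparison (and Poincar\'e duality to reach the degrees above the weight $d$, i.e. the range $0<i<d$), the mod-$\ell^k$ motivic groups are identified with \'etale cohomology, and passing to the limit $\varinjlim_{k}$ gives short exact sequences whose middle terms are $H^{2d-i}_{et}(X,\mathbb{Q}_\ell/\mathbb{Z}_\ell(d))$ and $H^{2d-i-1}_{et}(X,\mathbb{Q}_\ell/\mathbb{Z}_\ell(d))$; the former controls the divisible part $\mathrm{CH}^d(X,i)\otimes\mathbb{Q}_\ell/\mathbb{Z}_\ell$ and the latter the $\ell$-primary torsion $\mathrm{CH}^d(X,i)\{\ell\}$. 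Since Frobenius acts on $H^m(\overline{X},\mathbb{Q}_\ell(d))$ with eigenvalues of absolute value $q^{m/2-d}$, the (co)invariants, hence the corank of these $\mathbb{Q}_\ell/\mathbb{Z}_\ell$-groups, vanish unless $m\in\{2d,2d+1\}$. The degree $2d-i-1$ avoids this set for every $i\geq 0$, so the torsion $\mathrm{CH}^d(X,i)\{\ell\}$ is finite (and zero for almost all $\ell$) for all $i\geq 0$; the degree $2d-i$ avoids it precisely when $i>0$, in which case $\mathrm{CH}^d(X,i)\otimes\mathbb{Q}_\ell/\mathbb{Z}_\ell$ is finite and divisible, hence $0$, forcing $\ell$-divisibility of the torsion-free quotient. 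Collecting over $\ell$ gives finite torsion together with a uniquely divisible quotient, i.e. the $\mathrm{FQ}$ property for $i>0$.

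For $i>2d$ the cohomological degree $2d-i$ is negative, so $H^{2d-i}_M(X,\mathbb{Z}/m(d))=0$ for every $m$ (\'etale, respectively logarithmic de Rham--Witt, cohomology vanishes in negative degrees); the Bockstein sequences at levels $i$ and $i+1$ then give both $\mathrm{CH}^d(X,i)\otimes\mathbb{Z}/m=0$ and $\mathrm{CH}^d(X,i)[m]=0$, so the group is torsion-free and divisible, hence uniquely divisible, as claimed. The case $i=0$ is genuinely weaker and fits the pattern: here the degree $2d$ has positive corank (equal to the number of components), so $\mathrm{CH}_0(X)/\mathrm{tor}$ need not be divisible, but the degree-$(2d-1)$ argument above still shows $(\mathrm{CH}_0(X)\otimes\Lambda)_{\mathrm{tor}}$ is finite, which is exactly the asserted statement.

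The $p$-primary part is where I expect the real difficulty, and it is where the resolution-of-singularities hypothesis enters (note that under the standing conventions resolution in dimension $\leq d$ is available in both cases, so I may use it freely). When $\Lambda=\mathbb{Z}[1/p]$ there is nothing to prove at $p$. When $\Lambda=\mathbb{Z}$ one must replace the $\ell$-adic \'etale comparison by the Geisser--Levine identification $H^{j}_M(X,\mathbb{Z}/p^k(d))\cong H^{j-d}(X,W_k\Omega^d_{X,\log})$ and run the analogous finiteness-and-weight argument, now for the slope-zero part governed by crystalline Frobenius eigenvalues. The hard part will be controlling the divisible $p$-part of these logarithmic de Rham--Witt groups and carrying out the duality reduction into the comparison range; both steps rest on resolution of singularities, and together they constitute the technical heart of the proof.
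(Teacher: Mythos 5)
Your overall skeleton --- Bockstein sequences, comparison of motivic cohomology with finite coefficients to \'{e}tale cohomology, and the weight argument via the Weil conjectures --- is exactly the strategy behind the paper's (very terse) proof, and several steps are carried out correctly: the splitting criterion for membership in $\mathrm{FQ}$, the vanishing in negative cohomological degrees giving unique divisibility for $i>2d$, and the corank analysis identifying which degrees can carry divisible or torsion parts. But there is a genuine gap at the single most important step: the identification $\mathrm{CH}_{0}(X,i;\mathbb{Z}/\ell^{k}) \cong H^{2d-i}_{et}(X,\mathbb{Z}/\ell^{k}(d))$ in the range $i<d$, i.e.\ in cohomological degrees above the weight $d$. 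Beilinson--Lichtenbaum gives this only for $2d-i \leq d$ (plus injectivity one degree higher), and your proposed fix --- ``Poincar\'{e} duality to reach the degrees above the weight'' --- does not work. Duality on the motivic side merely rewrites $\mathrm{CH}_{0}(X,i;\mathbb{Z}/\ell^{k})$ as Suslin homology of $X$; over an algebraically closed field one could then conclude by Suslin--Voevodsky rigidity and \'{e}tale duality, but over $\mathbb{F}_{q}$ motivic cohomology with finite coefficients does \emph{not} satisfy Galois descent for $\overline{\mathbb{F}}_{q}/\mathbb{F}_{q}$ (if it did, Nisnevich and \'{e}tale motivic cohomology would agree in all degrees, which is false in general and is precisely what is at stake here). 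The comparison isomorphism in the full range of degrees for smooth projective varieties over finite fields is exactly the theorem of Kerz--Saito \cite{KeS1}, deduced from their proof of the Kato conjecture; it is the deep input, and it is the one thing the paper's own proof cites.

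The second gap is one you flag yourself: the $p$-primary part when $\Lambda=\mathbb{Z}$ is declared ``the technical heart'' and not actually carried out. No new de Rham--Witt analysis is needed: the same cited result gives the cycle-map isomorphism with $\mathbb{Z}/p^{k}$ coefficients (with target the \'{e}tale cohomology of logarithmic de Rham--Witt sheaves) under resolution of singularities in dimension $\leq d$, after which your finiteness-and-slope argument goes through unchanged. In short, your proof becomes complete --- and coincides with the paper's --- once both the appeal to ``Beilinson--Lichtenbaum plus duality'' and the unfinished $p$-adic discussion are replaced by the citation of the cycle-map isomorphism of \cite{KeS1}; as written, neither of those two steps can be repaired by the formal arguments you propose.
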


\begin{proof}
Without loss of generality, one can assume that $X$ is connected, hence of pure dimension $d=\dim(X)$.
Put $M=\mathrm{CH}_{0}(X) \otimes \Lambda$.
An exercise on homological algebra and the fact that the cycle maps from higher Chow groups to \'{e}tale cohomology groups with finite coefficients are isomorphisms (\cite{KeS1}) shows that the group $M \otimes_{\mathbb{Z}} \mathbb{Q}_{l}/\mathbb{Z}_{l}$ vanishes for any prime number $l \neq p$ (assuming resolution of singularities, this is true also for $l=p$).
The proposition follows easily from this.
Details are left to the reader. 
\end{proof}

\begin{cor}\label{parshin}
If $\Lambda \neq \mathbb{Z}$, assume resolution of singularities in $dim \leq d$.
Then, the following conditions are equivalent for any proper smooth scheme $X$ of dimension $d$ over a finite field and for any positive integer $i$:

(1)
$\mathrm{CH}_{0}(X,i) \otimes_{\mathbb{Z}}\Lambda$ is torsion.

(2)
$\mathrm{CH}_{0}(X,i) \otimes_{\mathbb{Z}}\Lambda$ is finitely generated.

(3)
$\mathrm{CH}_{0}(X,i) \otimes_{\mathbb{Z}}\Lambda$ is finite.
\end{cor}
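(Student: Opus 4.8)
The plan is to reduce everything to Proposition \ref{FQ}, which already carries the entire analytic content: it tells us that $M := \mathrm{CH}_{0}(X,i) \otimes_{\mathbb{Z}} \Lambda$ is an object of $\mathrm{FQ}$ for each $i>0$. Once this structural input is granted, the corollary is purely formal. First I would dispatch the trivial implications $(3)\Rightarrow(2)$ and $(3)\Rightarrow(1)$, since a finite abelian group is automatically both finitely generated and torsion; these require nothing. It then suffices to prove the two converse implications $(1)\Rightarrow(3)$ and $(2)\Rightarrow(3)$, which together close the cycle and yield the equivalence of all three conditions.

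For the converse directions I would fix, using Proposition \ref{FQ}, a decomposition $M \cong F \oplus D$ with $F$ finite and $D$ uniquely divisible. The key structural observation is that a uniquely divisible abelian group is precisely a $\mathbb{Q}$-vector space, and hence, if nonzero, is simultaneously torsion-free and \emph{not} finitely generated as a $\mathbb{Z}$-module (every finitely generated abelian group has the form $\mathbb{Z}^{r} \oplus (\text{finite})$, which is never a nonzero $\mathbb{Q}$-vector space). This single fact drives both remaining implications.

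Concretely, if $M$ is torsion (condition (1)), then its direct summand $D$, being a subgroup of a torsion group, is torsion; but $D$ is torsion-free as a $\mathbb{Q}$-vector space, so $D=0$ and $M=F$ is finite, giving (3). Symmetrically, if $M$ is finitely generated (condition (2)), then the summand $D$ is finitely generated, which forces $D=0$ by the observation above, so again $M=F$ is finite, giving (3). This completes the equivalence.

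I do not expect any genuine obstacle here, since the heavy lifting is done by Proposition \ref{FQ}: the corollary is a formal consequence of the shape of objects in $\mathrm{FQ}$. The only point deserving a moment of care is the identification of uniquely divisible groups with $\mathbb{Q}$-vector spaces and the resulting incompatibility of a nontrivial divisible part with either torsionness or finite generation; beyond that, every step is immediate from the definitions.
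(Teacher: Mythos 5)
Your proposal is correct and matches the paper's intent: the paper states Corollary \ref{parshin} without proof, precisely because it is the formal consequence of Proposition \ref{FQ} that you spell out (a uniquely divisible summand is a $\mathbb{Q}$-vector space, hence must vanish if the group is torsion or finitely generated). The only point worth a footnote is that when $\Lambda = \mathbb{Z}[1/p]$ one should read ``finitely generated'' as a $\Lambda$-module, but the same argument goes through since a nonzero $\mathbb{Q}$-vector space is not finitely generated over $\mathbb{Z}[1/p]$ either.
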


\subsection{Special values of zeta function via regulator}

\begin{prop}\label{proper_smooth_case}
If $X$ is a proper smooth scheme of dimension $d$ over a finite field $k=\mathbb{F}_{q}$,
then $\chi (\mathrm{Reg}_{i}(X))$ is well-defined for each $i$, and the following equation holds up to sign and $p$-power:
\[
\zeta_{X}(0)^{\ast} = \prod_{i=0}^{2\mathrm{dim}(X)} \chi (\mathrm{Reg}_{i}(X))^{(-1)^{i+1}}.
\]
Assuming resolution of singularities in $dim \leq d$, then the equation holds only up to sign.
\end{prop}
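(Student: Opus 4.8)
The plan is to reduce the identity to the explicit description of the regulator on $\mathrm{SP}_k$ recorded just before the statement, compute each Euler characteristic by hand, and then match the resulting alternating product with the Kerz--Saito formula quoted in the introduction. First I would exploit additivity: higher Chow groups, weight homology, and hence $\mathrm{Reg}_{\ast}$ all commute with finite disjoint unions, and both sides of the asserted equation are multiplicative over the connected components of $X$, so I may assume $X$ is connected. Writing $\mathbb{F}_{q^r} = H^0(X,\mathcal{O}_X)$ for its field of constants, $X$ becomes geometrically irreducible when regarded over $\mathbb{F}_{q^r}$; I would keep both base fields in play.

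With the explicit formula for $\mathrm{Reg}_i$ on a smooth projective scheme, the computation of $\chi(\mathrm{Reg}_i(X))$ splits into two cases. For $i>0$ the weight homology $H_i^W(X)$ vanishes (the weight complex of a smooth projective scheme is concentrated in degree $0$), so $\mathrm{Reg}_i(X)$ is the zero map out of $\mathrm{CH}_0(X,i)\otimes\Lambda$; Proposition \ref{FQ} places this group in $\mathrm{FQ}$, whence $\chi(\mathrm{Reg}_i(X)) = 1/\#(\mathrm{CH}_0(X,i)\otimes\Lambda)_{\mathrm{tor}}$ is well-defined, and it equals $1$ once $i>2d$ because the group is then uniquely divisible --- this is exactly what makes the product finite. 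For $i=0$ the map is the degree homomorphism $\deg_{X/k}\colon \mathrm{CH}_0(X)\otimes\Lambda \to \Lambda^{\pi_0(X)}$, whose kernel is the finite torsion subgroup of $\mathrm{CH}_0(X)\otimes\Lambda$ and whose cokernel $\Lambda^{\pi_0(X)}/\mathrm{Im}(\deg_{X/k})$ is finite, so $\chi(\mathrm{Reg}_0(X))$ is well-defined too. Collecting the factors with the sign $(-1)^{i+1}$ then yields
\[
\prod_{i=0}^{2d}\chi(\mathrm{Reg}_i(X))^{(-1)^{i+1}} = \frac{1}{\#\mathrm{Coker}(\deg_{X/k})}\prod_{i=0}^{2d}\bigl(\#(\mathrm{CH}_0(X,i)\otimes\Lambda)_{\mathrm{tor}}\bigr)^{(-1)^i}.
\]

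It remains to recognize the right-hand side as $\zeta_X(0)^{\ast}$. Up to a power of $p$ one has $\#(\mathrm{CH}_0(X,i)\otimes\Lambda)_{\mathrm{tor}} = \#\mathrm{CH}_0(X,i)_{\mathrm{tor}}$, with equality when $\Lambda=\mathbb{Z}$ (available under resolution of singularities), so the second factor is, up to sign and $p$-power, the Kerz--Saito expression; through the cycle maps from higher Chow groups to \'{e}tale cohomology with finite coefficients this is Milne's formula for the zeta value. When $X$ is geometrically irreducible the degree map is surjective (the index of a geometrically irreducible variety over a finite field is $1$ by Lang--Weil), so the cokernel factor is trivial and the identity follows at once. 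For a merely connected $X$ I would instead work over $\mathbb{F}_{q^r}$ and use the base-change relation $Z_{X/\mathbb{F}_q}(t) = Z_{X/\mathbb{F}_{q^r}}(t^r)$: since the only Frobenius eigenvalue equal to $1$ on $H^{\ast}(X_{\bar k})$ lies in $H^0$ with multiplicity one, $Z_{X/\mathbb{F}_q}$ has a simple pole at $t=1$, and the substitution $t\mapsto t^r$ produces the factor $r = \#\mathrm{Coker}(\deg_{X/k})$ relating $\zeta_X(0)^{\ast}$ to its analogue over $\mathbb{F}_{q^r}$; this is precisely the cokernel factor appearing in the product above.

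I expect the main obstacle to be this last identification: controlling the cokernel of the degree map together with the field-of-constants contribution, while tracking the $p$-part carefully enough that the statement comes out exactly up to sign as soon as $\Lambda=\mathbb{Z}$ is permitted. The cohomological input --- the cycle class maps and Milne's formula --- can be treated as a black box, but verifying that the degree-map bookkeeping reproduces the order and leading coefficient of $Z_X$ at $t=1$ is where the genuine care lies.
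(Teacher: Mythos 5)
Your proposal is correct and follows essentially the same route as the paper's own proof: reduce to connected $X$, pass to the field of constants $K = H^{0}(X,\mathcal{O}_{X})$ over which $X$ is geometrically irreducible, invoke the Kerz--Saito reduction to Milne's formula there, and absorb the discrepancy by matching $\#\mathrm{Coker}(\deg_{X/k}) = [K:k]$ against the factor produced by the relation $Z_{X/k}(t) = Z_{X/K}\big(t^{[K:k]}\big)$ at the simple pole $t=1$. The only cosmetic difference is that the paper establishes the simple pole of $Z_{X/K}(t)$ at $t=1$ via the functional equation and Serre's result on the order at $t=q^{d}$, whereas you appeal to the Riemann hypothesis over finite fields (Frobenius eigenvalue $1$ occurring only in $H^{0}$); both are valid.
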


\begin{proof}
First of all, let us consider the case that $X$ is geometrically irreducible over $k$.
In this case, the regulator maps are described as follows (in order to avoid ambiguity, we write $X/L$ instead of $X$ if $X$ is considered as a scheme over a field $L$):
\begin{align*}
\mathrm{Reg}_{0}(X/k) &= \deg_{X/k} : \mathrm{CH}_{0}(X) \otimes \Lambda \to \Lambda \\
\mathrm{Reg}_{i}(X/k) &= 0 : \mathrm{CH}_{0}(X,i) \otimes \Lambda \to 0 \ \ \ (if \ i>0)
\end{align*}
Since $X$ is proper, smooth and geometrically irreducible over $k$, we get the following equation up to sign (and up to $p$-power if $\Lambda \neq \mathbb{Z}$):
\[
\zeta_{X/k}(0)^{\ast} = \prod_{i=0}^{2\dim (X)} \# \mathrm{CH}_{0}(X,i)_{\mathrm{tor}}^{(-1)^{i}}
\]
This equation is proved in \cite[Section 10]{KeS1}.
The main method of the proof is to reduce the equation to Milne's formula (\cite[Theorem 0.4]{Mil}) by comparing higher Chow group with \'{e}tale cohomology via cycle map.

As a result of the previous proposition, we are reduced to the well-known fact that the degree homomorphism $\deg_{X/k} : \mathrm{CH}_{0}(X) \to \mathbb{Z}$ is surjective.

\ 

Now, let $X$ be a smooth proper (not necessarily geometrically irreducible over $k$).
Without loss of generality, one can assume that $X$ is connected.
Since $X$ is reduced, proper and connected, the ring of global sections $K=\Gamma (X, \mathcal{O}_{X})$ is a finite extension field over the base field $k$, and $X$ is geometrically irreducible over $K$.
So, the desired result is reduced to the following lemma.
\end{proof}

\begin{lem}
Let $X$ be a proper smooth scheme over a finite field $k$, and $K$ be a finite extension field over $k$.
Assume that the structure morphism $X \to \mathrm{Spec}k$ factors through the morphism $\mathrm{Spec}K \to \mathrm{Spec}k$ induced by inclusion of fields.
Then, the following equations hold:

\[
\zeta_{X/K}(0)^{\ast} = [K:k]\cdot \zeta_{X/k}(0)^{\ast}
\]
\[
\prod_{i=0}^{2\mathrm{dim}(X)} \chi (\mathrm{Reg}_{i}(X/K))^{(-1)^{i+1}} = [K:k] \cdot \prod_{i=0}^{2\mathrm{dim}(X)} \chi (\mathrm{Reg}_{i}(X/k))^{(-1)^{i+1}},
\]
where $[K:k]$ denotes the degree of field extension $K/k$.
\end{lem}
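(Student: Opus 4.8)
The plan is to derive both identities from a single source: extending the base field from $k$ to $K$ multiplies every local degree by $n:=[K:k]$. I would first reduce to the case where $X$ is connected (each side is compatible with finite disjoint unions, and $\pi_{0}(X)$ is intrinsic to the scheme), so that $K'=\Gamma(X,\mathcal{O}_{X})$ is a finite field with $K\subseteq K'$, $X$ is geometrically irreducible over $K'$, and I may set $e=[K':k]$, $f=[K':K]$, $e=nf$. The basic numerical input, used on both sides, is that for every closed point $x$ the residue field and its cardinality are intrinsic to $X$, so $\#k(x)=q^{\deg_{X/k}(x)}=q_{K}^{\deg_{X/K}(x)}$ with $q_{K}=q^{n}$, whence $\deg_{X/k}(x)=n\,\deg_{X/K}(x)$.

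For the zeta identity, substituting the degree relation into the Euler product gives the equality of rational functions $Z_{X/k}(t)=Z_{X/K}(t^{n})$. Both special values at $s=0$ are the leading Laurent coefficients at $t=1$ and at $u:=t^{n}=1$ respectively, so I would substitute $u=t^{n}$ and use $u-1=n(t-1)+O((t-1)^{2})$: if $Z_{X/K}(u)$ has leading term $a\,(u-1)^{m}$ at $u=1$, then $Z_{X/k}(t)$ has leading term $a\,n^{m}(t-1)^{m}$, so $\zeta_{X/k}(0)^{\ast}=n^{m}\,\zeta_{X/K}(0)^{\ast}$. The remaining point is that $m=-1$, i.e. the pole is simple: since $X$ is connected over $K$, Frobenius acts on the $f$ geometric connected components of $X\times_{K}\overline{K}$ as a single $f$-cycle, so the weight-zero factor of $Z_{X/K}$ is $(1-u^{f})^{-1}$, contributing exactly a simple pole at $u=1$, while purity (the Weil conjectures) forces every higher-weight factor to be finite and nonzero there. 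This yields $\zeta_{X/K}(0)^{\ast}=n\,\zeta_{X/k}(0)^{\ast}$ with the correct sign, since both special values are the respective residues.

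For the regulator identity, I would use the description of $\mathrm{Reg}_{i}$ on smooth projective schemes recalled above. For $i>0$, both $\mathrm{Reg}_{i}(X/K)$ and $\mathrm{Reg}_{i}(X/k)$ are the zero map out of $\mathrm{CH}_{0}(X,i)\otimes\Lambda$, a group intrinsic to $X$; hence $\chi(\mathrm{Reg}_{i}(X/K))=\chi(\mathrm{Reg}_{i}(X/k))$ and these factors cancel in the quotient of the two alternating products. For $i=0$, write $M=\mathrm{CH}_{0}(X)\otimes\Lambda$. Since $\deg_{X/k}=n\,\deg_{X/K}$ and $\Lambda$ is torsion-free, the two degree maps have the same kernel, equal to $M_{\mathrm{tor}}$ (the kernel of the degree map is the torsion subgroup, as recalled after Construction \ref{def_regulator_maps}), which is finite by Proposition \ref{FQ}; their images are $e\Lambda$ and $f\Lambda$, using surjectivity of $\deg_{X/K'}$. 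Thus $\chi(\mathrm{Reg}_{0}(X/k))=\#(\Lambda/e\Lambda)/\#M_{\mathrm{tor}}$ and $\chi(\mathrm{Reg}_{0}(X/K))=\#(\Lambda/f\Lambda)/\#M_{\mathrm{tor}}$. As $i=0$ carries the exponent $(-1)^{0+1}=-1$, the ratio of the two alternating products equals $\chi(\mathrm{Reg}_{0}(X/k))/\chi(\mathrm{Reg}_{0}(X/K))=\#(\Lambda/e\Lambda)/\#(\Lambda/f\Lambda)$, which is $e/f=n$ when $\Lambda=\mathbb{Z}$ and the prime-to-$p$ part of $n$ when $\Lambda=\mathbb{Z}[1/p]$ (i.e. $[K:k]$ up to a power of $p$, consistent with the ambient proposition).

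The degree bookkeeping and the $\mathrm{FQ}$-computation of $\chi$ are routine. The one genuinely non-formal step, which I expect to be the main obstacle, is pinning down that the pole of $Z_{X/K}$ at $u=1$ is exactly simple ($m=-1$); without it the zeta identity would read $\zeta_{X/K}(0)^{\ast}=n^{-m}\zeta_{X/k}(0)^{\ast}$ with an undetermined exponent. The care there lies in keeping the two base-change operations $X\times_{k}\overline{k}$ and $X\times_{K}\overline{K}$ distinct, as they differ precisely by the factor $n$ in the number of geometric components, and in invoking purity to exclude any extra vanishing of the higher-weight factors at $u=1$.
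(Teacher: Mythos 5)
Your proposal is correct, and its overall skeleton matches the paper's proof: both treat the two identities separately, both derive the zeta identity from the relation $Z_{X/k}(t)=Z_{X/K}\bigl(t^{[K:k]}\bigr)$ together with the claim that $Z_{X/K}$ has a \emph{simple} pole at $u=1$, and both reduce the regulator identity to a comparison of degree maps --- the paper states this as $\#\mathrm{Cok}(\deg_{X/k})=[K:k]\cdot\#\mathrm{Cok}(\deg_{X/K})$, which is exactly your computation $\Lambda/e\Lambda$ versus $\Lambda/f\Lambda$ with the common torsion kernel cancelling (and your remark that for $\Lambda=\mathbb{Z}[1/p]$ this only holds up to a $p$-power is a point the paper glosses over). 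The genuine divergence is in how the simple pole is established: the paper invokes the functional equation of the zeta function to transport the question from $t=1$ to the pole at $s=\dim X$ (i.e.\ $t=q^{-\dim X}$; the paper writes $q^{d}$, apparently a typo) and then cites Serre's classical result there, whereas you argue directly from Grothendieck's determinant formula and Deligne purity: Frobenius permutes the $f$ geometric components in a single cycle, so the $H^{0}$-factor is $(1-u^{f})^{-1}$, and purity keeps every higher-weight factor finite and nonzero at $u=1$. Your route costs the Weil conjectures but is self-contained and makes the exponent $m=-1$ transparent; the paper's route avoids purity at the price of quoting two external facts. You are also more explicit than the paper on the reduction step: the paper declares ``WLOG $X$ is geometrically irreducible over $K$,'' while you track $K'=\Gamma(X,\mathcal{O}_{X})$ and the factorization $e=nf$, which is what actually justifies that reduction. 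One shared caveat: the reduction to connected $X$ (your ``compatible with disjoint unions,'' the paper's WLOG) is not literally valid, since for $X$ with $c$ components both identities hold with $[K:k]^{c}$ in place of $[K:k]$; this is harmless only because the lemma is applied in the paper exclusively to connected $X$.
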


\begin{proof}
Without loss of generality, one can assume that $X$ is connected geometrically irreducible over $K$.
The second equation is a consequence of the fact that
\[
\# \mathrm{Cok}(\deg_{X/k}) = [K:k] \cdot \# \mathrm{Cok}(\deg_{X/K}).
\]

In order to show the first equation of the lemma, it suffices to note that a functional equation
\[
Z_{X/k}(t) = Z_{X/K}\big(t^{[K:k]}\big).
\]
holds by definition and that the order of the pole of $Z_{X/K}(t)$ at $t=1$ is equal to $1$.
The latter assertion can be proved as follows: by the famous functional equation of the zeta function, it results that the order at $t=1$ equals to the one at $t=q^{d}$, which is already shown to be $1$ in \cite{Se}.
The details are left to the reader.
\end{proof}

\begin{rem}\label{extension_to_motives}
\upshape
The statement of Proposition \ref{proper_smooth_case} holds not only for proper smooth schemes but also for effective Chow motives i.e. direct summands of proper smooth schemes in the additive category $\mathrm{Chow}^{\mathrm{eff}}(k)$.

To be precise, fix a prime number $l \neq ch(k)$, and define zeta functions of effective Chow motives by Grothendieck's determinant formula (note that $l$-adic etale cohomology can be defined for motives, and that arithmetic Frobenius action is compatible with projectors).
This is a rational function with $\mathbb{Q}_{l}$ coefficients.
On the other hand, the regulator maps are also compatible with projectors.
Thus, the both sides of the equation in Proposition are extended for effective Chow motives.

Let $M$ be an effective Chow motive.
To prove the equation for $M$, we can assume that $M$ is a direct summand of proper smooth geometrically irreducible scheme over $k$.
Then, observe that in this case the equation for $X$ was obtained using Milne's formula and cycle isomorphism between higher Chow groups and etale cohomology.
Since cycle isomorphism is compatible with projectors, it is enough to show Milne's formula for effective Chow motives, which can be done using the fact that it comes from Grothendieck's deteminant formula.
\end{rem}

Before stating the first main result of this paper, let us observe a basic fact of homological algebra.

\begin{lem}\label{morphism_of_complexes}
Let 
\[
C_{\ast} : \cdots \to C_{N} \stackrel{d_{N}}{\to} C_{N-1} \stackrel{d_{N-1}}{\to} C_{N-2} \to \cdots
\]
and
\[
C^{\prime}_{\ast} : \cdots \to C^{\prime}_{N} \stackrel{d^{\prime}_{N}}{\to} C^{\prime}_{N-1} \stackrel{d^{\prime}_{N-1}}{\to} C^{\prime}_{N-2} \to \cdots
\]
be two homological complexes of abelian groups,
and $f_{\ast} : C_{\ast} \to C^{\prime}_{\ast}$ be a morphism of complexes.
Assume one of the following conditions:

(i) $C_{i}$ and $C^{\prime}_{i}$ are objects of $\mathrm{FQ}$ for each $i$.
and that all but finitely many $C_{i}, C^{\prime}_{i}$ are uniquely divisible,

(ii) all but finitely many $C_{i}, C^{\prime}_{i}$ are zero, and kernel and cokernel of $f_{i}$ are finite groups for any $i$.

Then, the kernel and cokernel of the map $H_{i}(f_{\ast}) : H_{i}(C_{\ast}) \to H_{i}(C^{\prime}_{\ast})$ induced on the $i$-th homology groups have finite torsion subgroups for each $i$, and the following equation holds:
\[
\prod_{i}\chi (H_{i}(f_{\ast}))_{\mathrm{tor}}^{(-1)^{i}} = \prod_{i}\chi (f_{i})_{\mathrm{tor}}^{(-1)^{i}}.
\]
\end{lem}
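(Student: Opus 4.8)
The plan is to pass to the mapping cone of $f_\ast$ and to read off both sides of the claimed identity as two computations of the same torsion Euler characteristic. Write $D_\ast = \mathrm{Cone}(f_\ast)$, so that $D_n = C_{n-1} \oplus C'_n$ with the usual cone differential. The long exact sequence of the cone yields, for every $n$, a short exact sequence
\[
0 \to \mathrm{Coker}(H_n(f_\ast)) \to H_n(D_\ast) \to \mathrm{Ker}(H_{n-1}(f_\ast)) \to 0 .
\]
Once I know that the relevant torsion subgroups are finite, applying the torsion functor (exact on the classes of groups that occur, by the lemma preceding Proposition \ref{FQ} in case (i) and trivially in case (ii)) and reindexing the alternating product gives
\[
\prod_n \bigl(\# H_n(D_\ast)_{\mathrm{tor}}\bigr)^{(-1)^n} = \prod_n \chi(H_n(f_\ast))^{(-1)^n},
\]
which is the left-hand side of the lemma. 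It then remains to evaluate the same torsion Euler characteristic of $D_\ast$ in terms of the maps $f_i$, and this is where the two hypotheses are used differently.

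Under hypothesis (i), every term $D_n$ lies in $\mathrm{FQ}$ (closure under direct sums being a special case of closure under extensions), and all but finitely many $D_n$ are uniquely divisible, hence have trivial torsion. Applying the exact torsion functor $\mathrm{FQ} \to (\text{finite groups})$ to the complex $D_\ast$ therefore produces a bounded complex of finite abelian groups whose homology is $H_\ast(D_\ast)_{\mathrm{tor}}$, and the elementary Euler characteristic identity for bounded complexes of finite groups gives $\prod_n (\# H_n(D_\ast)_{\mathrm{tor}})^{(-1)^n} = \prod_n (\#(D_n)_{\mathrm{tor}})^{(-1)^n}$. Since $(D_n)_{\mathrm{tor}} = (C_{n-1})_{\mathrm{tor}} \oplus (C'_n)_{\mathrm{tor}}$, and since the four-term exact sequence $0 \to \mathrm{Ker}(f_i) \to C_i \to C'_i \to \mathrm{Coker}(f_i) \to 0$ together with exactness of the torsion functor yields $\#(C'_i)_{\mathrm{tor}}/\#(C_i)_{\mathrm{tor}} = \chi(f_i)$, reindexing the product produces exactly $\prod_i \chi(f_i)^{(-1)^i}$, as required.

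Under hypothesis (ii) the terms $C_i, C'_i$ need not lie in $\mathrm{FQ}$, so the term-wise argument above is unavailable and a different device is needed. Here I regard $D_\ast$ as the total complex of the two-column double complex $C_\ast \xrightarrow{f_\ast} C'_\ast$ and run the spectral sequence obtained by taking homology first in the horizontal ($f$) direction. Its $E^1$-page has entries $\mathrm{Ker}(f_i)$ and $\mathrm{Coker}(f_i)$, finite by assumption, and the double complex is bounded; convergence then forces each $H_n(D_\ast)$ to be finite, and invariance of the Euler characteristic along the pages gives
\[
\prod_n \bigl(\# H_n(D_\ast)\bigr)^{(-1)^n} = \prod_i \bigl(\#\mathrm{Coker}(f_i)\bigr)^{(-1)^i}\bigl(\#\mathrm{Ker}(f_i)\bigr)^{(-1)^{i+1}} = \prod_i \chi(f_i)^{(-1)^i}
\]
once total degrees are matched. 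The finiteness of $H_n(D_\ast)$ also supplies, through the cone short exact sequence above, the finiteness of the torsion subgroups of $\mathrm{Ker}(H_n(f_\ast))$ and $\mathrm{Coker}(H_n(f_\ast))$ asserted in the statement; under hypothesis (i) the same finiteness follows from the boundedness of the complex of finite groups $(D_\ast)_{\mathrm{tor}}$.

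The routine verifications are the sign and index conventions in the cone and the double complex, together with the standard finite-group Euler characteristic identity. I expect the main conceptual obstacle to be case (ii): because the terms are no longer in $\mathrm{FQ}$ one cannot simply push the torsion functor through the complex, so the equality of torsion Euler characteristics must instead be extracted from the degeneration of the alternating product along the spectral sequence of the mapping cone. Packaging both hypotheses uniformly through the single object $D_\ast = \mathrm{Cone}(f_\ast)$ is what makes both halves of the argument converge on the same quantity $\prod_n (\# H_n(D_\ast)_{\mathrm{tor}})^{(-1)^n}$.
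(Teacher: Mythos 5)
Your argument is correct, but it takes a genuinely different route from the paper's one-sentence proof. The paper proves case (i) by induction on the length of the complexes together with the snake lemma---in effect iterating Lemma \ref{multiplicativity}---and then relates case (ii) to case (i) via exactness of the torsion functor $(-)_{\mathrm{tor}}\colon \mathrm{FQ}\to(\text{finite groups})$. You instead collapse everything onto the mapping cone $D_\ast=\mathrm{Cone}(f_\ast)$ and compute the single quantity $\prod_n(\# H_n(D_\ast)_{\mathrm{tor}})^{(-1)^n}$ in two ways: the cone long exact sequence identifies it with the left-hand side of the lemma, while a termwise evaluation through the exact torsion functor (case (i)) or the two-column spectral sequence of the double complex (case (ii)) identifies it with the right-hand side. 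Both arguments ultimately rest on the same pillars---exactness of $(-)_{\mathrm{tor}}$ on $\mathrm{FQ}$ and multiplicativity of orders in exact sequences of finite groups---but your packaging buys something real: in case (ii) the terms $C_i$, $C'_i$ need not lie in $\mathrm{FQ}$ (e.g.\ $\mathbb{Z}\stackrel{2}{\to}\mathbb{Z}$ is allowed), so the paper's claimed reduction of (ii) to (i) cannot be read literally, whereas your spectral-sequence argument uses only finiteness of $\mathrm{Ker}(f_i)$, $\mathrm{Coker}(f_i)$ and boundedness, and is self-contained; it also delivers the finiteness of $H_n(D_\ast)$, hence of the torsion of $\mathrm{Ker}(H_n(f_\ast))$ and $\mathrm{Coker}(H_n(f_\ast))$, for free. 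One point you should make explicit in case (i): applying the exact torsion functor to the cone short exact sequence requires $H_n(D_\ast)$, $\mathrm{Ker}(H_n(f_\ast))$ and $\mathrm{Coker}(H_n(f_\ast))$ to lie in $\mathrm{FQ}$; this does hold, because $\mathrm{FQ}$ is closed under kernels and cokernels (so cycles, boundaries, homology of a complex of $\mathrm{FQ}$-objects, and kernels/cokernels of maps between such homologies all stay in $\mathrm{FQ}$), but your parenthetical appeal to the closure lemma should be expanded into this one-line verification.
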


\begin{proof}
Noting that the functor $(-)_{\mathrm{tor}} : \mathrm{FQ} \to \text{(finite groups)}$ is exact, the case (ii) is reduced to the case (i), whcih can be proved by induction on the length of the complexes and the snake lemma.
\end{proof}

\begin{thm}\label{boundedness}
For any quasi-projective scheme $X$ over a perfect field $k$ of exponential characteristic $p$,
the weight complex
$W(X)[\frac{1}{p}] \in K^{+}(\mathrm{Chow}^{\mathrm{eff}}(k)[\frac{1}{p}])$
has a representation of the form
\[
0 \to X_{d} \to X_{d-1} \to \cdots \to X_{1} \to X_{0} \to 0
\]
where $d=dim(X)$ and $X_{a}$ is a Chow motive for each $a$.
\end{thm}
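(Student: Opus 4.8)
The plan is to reduce the desired boundedness to the analogous (and essentially known) boundedness of Bondarko's weight complex, transporting the latter across a comparison of the two weight complex functors. Recall that the weight complex $W(X)$ of Gillet-Soul\'{e}-Kerz-Saito is built, via a $\Lambda$-admissible hyperenvelope $Y_{\bullet} \to \overline{X}_{\bullet}$ of a projective compactification, as the cone $\mathrm{cone}(Y_{\bullet} \to \overline{X}_{\bullet})$ in $K^{+}(\mathrm{Chow}^{\mathrm{eff}}(k)[\tfrac{1}{p}])$ (see \cite{KeS2}); a priori this lives only in the bounded-below homotopy category, since a hyperenvelope is an unbounded simplicial object. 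On the other hand, Bondarko's theory furnishes a weight structure on the category $DM^{\mathrm{eff}}_{\mathrm{gm}}(k)[\tfrac{1}{p}]$ of Voevodsky motives whose heart is equivalent to $\mathrm{Chow}^{\mathrm{eff}}(k)[\tfrac{1}{p}]$, together with a weight complex functor $t$ valued in $K^{b}(\mathrm{Chow}^{\mathrm{eff}}(k)[\tfrac{1}{p}])$, landing by construction in \emph{bounded} complexes. In positive characteristic this weight structure and functor are available after inverting $p$ thanks to Kelly's motivic resolution theorem, which guarantees that $DM^{\mathrm{eff}}_{\mathrm{gm}}(k)[\tfrac{1}{p}]$ is generated as a thick subcategory by the motives of smooth projective schemes and that the expected abstract blow-up and cdh descent triangles hold.

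First I would pin down the motive whose Bondarko weight complex should equal $W(X)$. Because the weight homology theory $H^{W}$ carries the Borel-Moore type localization sequences of Definition \ref{def_homology_theory} --- namely $H_{i}(Y) \to H_{i}(X) \to H_{i}(U) \to H_{i-1}(Y)$ for a closed $Y \subset X$ with open complement $U$ --- the correct candidate is the motive with compact support $M^{c}(X) \in DM^{\mathrm{eff}}_{\mathrm{gm}}(k)[\tfrac{1}{p}]$, and the hyperenvelope cone $\mathrm{cone}(Y_{\bullet} \to \overline{X}_{\bullet})$ is precisely the standard presentation of $M^{c}(X) = \mathrm{cone}(M^{c}(Y) \to M^{c}(\overline{X}))$ by smooth projective pieces. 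Both functors agree on $\mathrm{SP}_{k}$, where each sends $X$ to $M(X)$ placed in degree $0$, and both are compatible with the cone construction along closed-open decompositions; I would leverage this together with the characterization of $t$ by its restriction to the generating family to obtain a canonical isomorphism $W(X) \cong t(M^{c}(X))$ in $K^{+}(\mathrm{Chow}^{\mathrm{eff}}(k)[\tfrac{1}{p}])$.

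Granting this comparison, the bound follows from the dimension estimate for Bondarko weight complexes. Using Kelly's resolution theorem one resolves $M^{c}(X)$ by an iterated cone of motives of smooth projective schemes of dimension $\le d$: an abstract blow-up replaces a $d$-dimensional scheme by a smooth projective $d$-fold together with strictly lower-dimensional loci, and induction on $\dim$ adds at most one to the homotopy length at each drop of dimension. Hence $M^{c}(X)$ lies in the subcategory generated by such motives within the weight range $[0,d]$, so $t(M^{c}(X))$ --- and therefore $W(X)[\tfrac{1}{p}]$ --- admits a representative $0 \to X_{d} \to \cdots \to X_{0} \to 0$ with each $X_{a}$ an effective Chow motive. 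For smooth projective $X$ one recovers the single term in degree $0$, and the simple normal crossing example of the introduction shows that the bound $d$ is attained by the $0$-dimensional strata, so the range is sharp.

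The main obstacle is the middle step: identifying the \emph{explicitly constructed} hyperenvelope cone $W(X)$ of Gillet-Soul\'{e}-Kerz-Saito with the \emph{abstractly defined} Bondarko weight complex $t(M^{c}(X))$, entirely with $\mathbb{Z}[\tfrac{1}{p}]$-coefficients and without resolution of singularities. The difficulty is twofold. First, one must check that the hyperenvelope, which a priori produces an unbounded complex, nonetheless represents the bounded object $t(M^{c}(X))$; equivalently, that all sufficiently deep terms of the normalized complex become contractible in the homotopy category. Second, the two functors are constructed by genuinely different mechanisms --- simplicial descent along proper hypercovers versus the weight filtration on $DM$ --- so matching them requires verifying the compatibility of the respective localization and descent triangles and invoking the uniqueness of the weight complex functor on the generating family $\mathrm{SP}_{k}$, where Kelly's theorem is exactly what supplies enough smooth projective generators in positive characteristic.
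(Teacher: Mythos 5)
Your overall strategy coincides with the paper's: compare the Gillet--Soul\'{e}--Kerz--Saito weight complex $W(X)$ with Bondarko's weight complex $t(M^{c}(X))$, using Kelly's theorem so that $M^{c}$ lands in $\mathrm{DM}_{\mathrm{gm}}^{\mathrm{eff}}(k)[1/p]$ and the weight complex functor is available after inverting $p$, then transport boundedness (the paper gets the length estimate from the localization triangle, induction on dimension, Poincar\'{e} duality $M^{c}(X)^{\ast}=M(X)(-d)[-2d]$ and Bondarko's results). However, there is a genuine gap at exactly the point you yourself flag as ``the main obstacle'': you never prove the comparison isomorphism $W(X) \simeq t(M^{c}(X))$, and the mechanism you propose for it --- ``the characterization of $t$ by its restriction to the generating family'' together with compatibility with cones --- does not work as stated. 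The functor $W$ is not defined on $\mathrm{DM}$; it is defined on varieties via hyperenvelopes, and cones in a triangulated (or homotopy) category are not functorial, so agreement of the two constructions on $\mathrm{SP}_{k}$ plus the existence of localization triangles on both sides does not by itself propagate to an isomorphism on all varieties. A concrete descent input is needed to identify the explicitly constructed simplicial object with the abstract weight complex.

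The paper supplies precisely this input. After reducing (by induction on dimension and the localization triangle, as you do) to a projective variety $Z$ with a projective smooth $\Lambda$-admissible hyperenvelope $\tilde{Z}_{\cdot} \to Z$, the key assertion is that $M(\tilde{Z}_{\cdot}) \to M(Z)$ is an isomorphism in $\mathrm{DM}^{\mathrm{eff}}_{-}(k)[1/p]$; granting this, one gets $t(M^{c}(Z)) = t(M(Z)) \simeq t(M(\tilde{Z}_{\cdot})) \simeq \tilde{Z}_{\cdot} = W(Z)$, since $t$ sends the complex built from the smooth projective schemes $\tilde{Z}_{i}$ to itself. Proving that key assertion is the real new content of the theorem: the paper checks it in $\mathrm{DM}^{\mathrm{eff}}_{-}(k)_{(l)}$ for each prime $l \neq p$, invokes Kelly's resolution theorem to reduce to showing that $L(\tilde{Z}_{\cdot})_{ldh} \to L(Z)_{ldh}$ is a quasi-isomorphism of complexes of $l$dh sheaves with transfers, observes that a $\Lambda$-admissible hyperenvelope is an $l$dh hypercovering, and then proves a dedicated descent lemma (via a chain of adjunctions comparing derived categories of presheaves and $l$dh sheaves, with and without transfers, using the finiteness of $l$dh cohomological dimension for sheaves with transfers) to conclude. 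Without this step --- or some substitute for it --- your argument establishes only that boundedness \emph{would} follow from the comparison, not the comparison itself, so the theorem remains unproved in positive characteristic.
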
 

\begin{rem}
\upshape
If resolution of singularities holds over $k$,
then the theorem is valid for any varieties over $k$,
since $W(X)$ can be essentially calculated by using irreducible components of simple normal crossing divisors.
For $p>0$, we prove the theorem by comparing our weight complex with the one defined by Bondarko, which is known to be bounded in the homotopy category of (effective) Chow motives.
\end{rem}

\begin{proof}
Let $t : \mathrm{DM}_{\mathrm{gm}}^{\mathrm{eff}}(k)[1/p] \to \mathcal{H}^{b}(\mathrm{Chow}^{\mathrm{eff}}(k)[1/p])$ be the triangulated functor called "weight complex functor" defined by Bondarko (see \cite{Bon}, \cite{Bon2}).
According to Kelly's result on Voevodsky's motives over a field of positive characteristic, the functor $M^{c}(-) : (\mathrm{Sch}_{k})^{\mathrm{prop}} \to \mathrm{DM}_{-}^{\mathrm{eff}}(k)[1/p]$ which associates every variety over $k$ its motive with compact support factors through the full subcategory $\mathrm{DM}_{\mathrm{gm}}^{\mathrm{eff}}(k)[1/p]$ of geometric motives with $p$ inverted (see \cite[Lemma 5.5.2, 5.5.6]{Kel}).
Here, $(\mathrm{Sch}_{k})^{\mathrm{prop}}$ is the subcategory of $\mathrm{Sch}_{k}$ whose objects are the same and morphisms are proper morphisms.
Composing these functors, we get a covariant functor $t \circ M^{c}(-) : (\mathrm{Sch}_{k})^{\mathrm{prop}} \to \mathcal{H}^{b}(\mathrm{Chow}^{\mathrm{eff}}(k)[1/p])$.

It is enough to show that for any quasi-projective variety $X$ over $k$, the two objects $W(X)$ and $t \circ M^{c}(X)$ in $\mathcal{H}^{-}(\mathrm{Chow}^{\mathrm{eff}}(k)[1/p])$ are isomorphic.
Indeed, $t \circ M^{c}(X)$ is bounded.
One can prove the assertion on the length of the weight complex as follows: by induction on dimension and the localization distinguished triangle of motives with compact support, the assertion is reduced to the case that $X$ is smooth.
In this case, Poincare duality $M^{c}(X)^{\ast} = M(X)(-d)[-2d]$ (\cite{Kel}) and the results established by Bondarko (Theorem 2.2.1 (3),  Proposition 3.2.1 (2), Proposition 3.5.1 in \cite{Bon2}) show the assertion.
 
Now, to construct an isomorphism $W(X) \simeq t \circ M^{c}(X)$, take a compactification $X \to \overline{X}$ with complement $Y = \overline{X} \setminus X$.
Then there exists a distinguished triangle in $\mathrm{DM}_{\mathrm{gm}}^{\mathrm{eff}}(k)[1/p]$ of the form
\[
M^{c}(Y) \to M^{c}(\overline{X}) \to M^{c}(X) \stackrel{+}{\to} 
\]
which induces the one in $\mathcal{H}^{b}(\mathrm{Chow}^{\mathrm{eff}}(k)[1/p])$ of the form
\[
t \circ M^{c}(Y) \to t \circ M^{c}(\overline{X}) \to t \circ M^{c}(X) \stackrel{+}{\to}.
\]

By induction on dimension, and considering that our weight complex $W$ has an analogous triangle and that $dim(Y) < dim(X)$,  we are reduced to show the following assertion:
for any projective variety $Z$, there exists an isomorphism $W(Z) \simeq t \circ M^{c}(Z)$ which is compatible with closed immersions.

Let $Z$ be a projective variety over $k$, and $\tilde{Z}_{\cdot} \to Z$ be a projective smooth $\Lambda$-admissible hyperenvelope of $Z$.
By construction of $t$, we have $t(M(\tilde{Z}_{\cdot})) \simeq \tilde{Z}_{\cdot}$ in $\mathcal{H}^{b}(\mathrm{Chow}^{\mathrm{eff}}(k)[1/p])$ since each $\tilde{Z}_{i}$ is projective smooth.
We have only to show that $t(M^{c}(Z)) = t(M(Z)) \simeq t(M(\tilde{Z}_{\cdot})) (\simeq \tilde{Z}_{\cdot})$

Let us prove that the morphism $M(\tilde{Z}_{\cdot}) \to M(Z)$ is an isomorphism in $\mathrm{DM}_{-}^{\mathrm{eff}}(k)[1/p]$.
This is equivalent to say that it is an isomorphism in $\mathrm{DM}_{-}^{\mathrm{eff}}(k)_{(l)}$ for every prime number $l \neq p$.
In the following, we fix such a prime number $l$.
By Kelly's resolution theorem (see \cite[Theorem 5.3.1]{Kel}), it suffices to show that the morphism of complexes of $l$dh sheaves with transfers $L(\tilde{Z}_{\cdot})_{ldh} \to L(Z)_{ldh}$ is a quasi isomorphism.
Since $\tilde{Z}_{\cdot} \to Z$ is a $\Lambda$-admissible hyperenvelope, it is an $l^{\prime}$-decomposed hypercovering by definition, hence an $l$dh hypercovering in Kelly's terminology.
The following lemma concludes the desired statement.
\end{proof}

\begin{lem}
Let $k$ be as above, $l \neq p$ be a prime number and $\tilde{X}_{\cdot} \to X$ be an $l$dh hypercovering of a variety $X$ over $k$.
Then, the morphism $L(\tilde{X}_{\cdot})_{ldh} \to L(X)_{ldh}$ of complexes of $l$dh sheaves with transfers is a quasi isomorphism.
\end{lem}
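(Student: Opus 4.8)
The assertion is a form of cohomological descent: it says that the free $ldh$-sheaf with transfers on $X$ is resolved by the complex associated to the hypercovering $\tilde{X}_{\cdot}$. The plan is to reduce the statement first to the case of a single $ldh$-covering via the coskeleton filtration, and then to the sheaf axiom for $L(-)_{ldh}$.

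First I would recall that $\tilde{X}_{\cdot} \to X$ being an $ldh$-hypercovering means that, setting $\tilde{X}_{-1} = X$, each canonical map $\tilde{X}_{n+1} \to (\mathrm{cosk}_{n}\tilde{X}_{\cdot})_{n+1}$ is an $ldh$-covering. Since $ldh$-sheafification is exact, it suffices to prove that the augmented complex of $ldh$-sheaves with transfers $L(\tilde{X}_{\cdot})_{ldh} \to L(X)_{ldh}$ is acyclic, i.e. that $L(X)_{ldh}$ (placed in degree $0$) is quasi-isomorphic to the normalized complex of the simplicial sheaf $L(\tilde{X}_{\cdot})_{ldh}$.

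Next comes the core inductive step. For a single $ldh$-covering $U \to X$, the sheaf axiom for the $ldh$-sheafification of $L(X)$ says precisely that the \v{C}ech complex $\cdots \to L(U \times_{X} U)_{ldh} \to L(U)_{ldh} \to L(X)_{ldh} \to 0$ is exact. Bootstrapping from coverings to hypercoverings is then the standard argument (SGA4, Expos\'e V; or the simplicial-sheaf version of Dugger--Hollander--Isaksen): using the coskeleton tower and the matching-object covering conditions, one obtains the acyclicity of the full hypercover complex, for instance by filtering $\tilde{X}_{\cdot}$ by skeleta so that the associated graded pieces are the \v{C}ech complexes of the individual matching-object coverings, each of which is acyclic in positive degrees by the previous case. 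Equivalently, $\tilde{X}_{\cdot} \to X$ is a local weak equivalence of simplicial $ldh$-(pre)sheaves, and the additive functor $L(-)_{ldh}$ together with normalization carries local weak equivalences to $ldh$-local quasi-isomorphisms.

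The main obstacle is the compatibility of this descent with the transfer structure: $L(-)$ records finite correspondences rather than morphisms, so both the sheaf axiom and the \v{C}ech exactness above must be verified for presheaves \emph{with transfers}, not merely for presheaves of sets or abelian groups. This is where Kelly's analysis of the $ldh$ topology enters: the $ldh$ topology is compatible with transfers, so that $ldh$-sheafification preserves the transfer structure and the free sheaf-with-transfers functor is exact on covers. Granting this compatibility --- which I would cite from Kelly's work rather than reprove --- the descent argument above is formal, and the lemma follows. I expect the verification of transfer-compatibility of the $l'$-decomposed proper covers generating $ldh$ to be the only genuinely delicate point, but it is precisely the content of Kelly's resolution package already invoked in the proof of Theorem \ref{boundedness}.
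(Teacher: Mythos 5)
Your overall strategy --- reduce hypercover descent to \v{C}ech descent for a single $l$dh cover, then handle the single-cover case by Kelly's compatibility of the $l$dh topology with transfers --- has a genuine gap at the reduction step, and it is precisely the gap that the paper's actual proof is designed to avoid. The two justifications you offer for bootstrapping from covers to hypercovers are both unavailable in the presence of transfers. The skeletal/coskeletal induction of SGA4 (and the Dugger--Hollander--Isaksen picture) rests on the local structure of the topos of sheaves on $\mathrm{Sch}_{k}$: a hypercover is a trivial local fibration, covers admit local sections, and those sections produce the contracting homotopies (moreover, the graded pieces of the skeletal filtration are not literally \v{C}ech complexes; the classical argument runs through the coskeleton tower and matching objects). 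All of this applies to $\mathbb{Z}(-)_{ldh}$, whose sections are generated by morphisms of schemes, but not to $L(-)_{ldh}$, whose sections are finite correspondences: a scheme-theoretic local section of a cover does not contract cycles, and lifting correspondences along covers --- even $l$dh-locally, even with $\mathbb{Z}_{(l)}$-coefficients --- is exactly the non-formal content of Kelly's theorems, available for a single cover but not packaged for the simplicial induction. Your fallback claim, that ``the additive functor $L(-)_{ldh}$ together with normalization carries local weak equivalences to $l$dh-local quasi-isomorphisms,'' is not a citable general fact: $L$ changes the site from $\mathrm{Sch}_{k}$ to $\mathrm{Cor}_{k}$, no adjunction transports local equivalences across this change, and for hypercovers the claim is essentially a restatement of the lemma you are trying to prove. (A smaller inaccuracy: the exactness of the \v{C}ech complex of free sheaves with transfers is not ``the sheaf axiom''; it is the $l$dh analogue of the nontrivial lifting result of \cite{MVW} for Nisnevich covers, and one of the main theorems of \cite{Kel}. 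You do, however, correctly identify transfers as the issue there and cite Kelly for it.)

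The paper circumvents the bootstrap entirely by dualizing. By Yoneda's lemma it suffices to show that $\mathrm{Hom}_{D_{\mathrm{tr}}}(-,K)$ takes $L(\tilde{X}_{\cdot})_{ldh} \to L(X)_{ldh}$ to a bijection for every bounded above complex $K$ of $l$dh sheaves with transfers; way-out arguments plus Kelly's finite $l$dh cohomological dimension reduce this to $K=F[i]$ for a single sheaf with transfers $F$. Both Hom groups are then identified with Hom groups computed \emph{without} transfers --- using that injective $l$dh sheaves with transfers are acyclic as plain $l$dh sheaves, the sheafification adjunctions of \cite{Kel}, and Yoneda for representables with and without transfers --- and at that point classical cohomological descent, i.e.\ the quasi-isomorphism $\mathbb{Z}(\tilde{X}_{\cdot})_{ldh} \to \mathbb{Z}(X)_{ldh}$ of sheaves without transfers, finishes the proof. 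So the single-cover exactness you invoke never even appears explicitly; the real work is the comparison of the derived categories with and without transfers. If you wanted to salvage your direct approach, you would have to redo the SGA4 simplicial induction with every use of ``local section'' replaced by a correspondence-lifting statement with transfers; that is a substantial new argument, not a citation.
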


\begin{proof}
By Yoneda's lemma, the assertion is equivalent to saying that the induced map $\mathrm{Hom}_{D_{\mathrm{tr}}}(L(X)_{ldh}, K) \to \mathrm{Hom}_{D_{\mathrm{tr}}}(L(\tilde{X}_{\cdot})_{ldh}, K)$ of hom sets of the derived category $D_{\mathrm{tr}}=D^{-}(\mathrm{Shv}_{ldh}(\mathrm{Cor}_{k}))$ is bijective for any bounded above complex $K$ of $l$dh sheaves with transfers on the category $\mathrm{Sch}_{k}$ of separated schemes of finite type over $k$.
By standard arguments on way-out functors and by the finite dimensionality of $l$dh cohomology (for sheaves with transfers, see \cite[Theorem 3.4.17]{Kel}), we can assume that $K=F[i]$, where $F$ is an $l$dh sheaf with transfers and $i$ is an integer.
Since $\tilde{X}_{\cdot}$ is an $l$dh hypercover of $X$, we have a quasi isomorphism $\mathbb{Z}(\tilde{X}_{\cdot})_{ldh} \to \mathbb{Z}(X)_{ldh}$ of $l$dh sheaves on schemes (without transfers), which induces
\begin{align*}
&\mathrm{Hom}_{D_{\mathrm{tr}}}(L(X)_{ldh}, F[i]) \\
&\stackrel{(1)}{=}H_{ldh}^{i}(X, F)
\stackrel{(2)}{=}\mathrm{Hom}_{D}(\mathbb{Z}(X)_{ldh}, F[i]) \\
&\stackrel{(3)}{=}\mathrm{Hom}_{D}(\mathbb{Z}(\tilde{X}_{\cdot})_{ldh}, F[i])
\stackrel{(4)}{=}\mathrm{Hom}_{D^{+}}(\tau_{\geq i}(\mathbb{Z}(\tilde{X}_{\cdot})_{ldh}), F[i]) \\
&\stackrel{(5)}{=}\mathrm{Hom}_{D^{+}}((\tau_{\geq i}(\mathbb{Z}(\tilde{X}_{\cdot})))_{ldh}, F[i]) 
\stackrel{(6)}{=}\mathrm{Hom}_{D_{\mathrm{pre}}^{+}}(\tau_{\geq i}(\mathbb{Z}(\tilde{X}_{\cdot})), D^{+}(\iota)(F[i])) \\
&\stackrel{(7)}{=}\mathrm{Hom}_{D_{\mathrm{pre}}^{+}}(\tau_{\geq i}(\mathbb{Z}(\tilde{X}_{\cdot})), I^{\cdot}[i]) 
\stackrel{(8)}{=}\mathrm{Hom}_{D_{\mathrm{pre}}}(\mathbb{Z}(\tilde{X}_{\cdot}), I^{\cdot}[i]) \\
&\stackrel{(9)}{=}\mathrm{Hom}_{\mathcal{H} (\mathrm{PSh}(\mathrm{Sch}_{k}))} (\mathbb{Z}(\tilde{X}_{\cdot}), I^{\cdot}[i]) 
\stackrel{(10)}{=}\mathrm{Hom}_{\mathcal{H} (\mathrm{PSh}(\mathrm{Cor}_{k}))} (L(\tilde{X}_{\cdot}), I^{\cdot}[i]) \\
&\stackrel{(11)}{=}\mathrm{Hom}_{D_{\mathrm{tr},\mathrm{pre}}}(L(\tilde{X}_{\cdot}), I^{\cdot}[i]) 
\stackrel{(12)}{=}\mathrm{Hom}_{D^{+}_{\mathrm{tr},\mathrm{pre}}} (\tau_{\geq i} L(\tilde{X}_{\cdot}), I^{\cdot}[i]) \\
&\stackrel{(13)}{=}\mathrm{Hom}_{D^{+}_{\mathrm{tr}}}((\tau_{\geq i} L(\tilde{X}_{\cdot}))_{ldh}, F[i]) 
\stackrel{(14)}{=}\mathrm{Hom}_{D^{+}_{\mathrm{tr}}}(\tau_{\geq i} (L(\tilde{X}_{\cdot})_{ldh}), F[i]) \\
&\stackrel{(15)}{=}\mathrm{Hom}_{D_{\mathrm{tr}}}(L(\tilde{X}_{\cdot})_{ldh}, F[i])
\end{align*}
where
\[
D=D(\mathrm{Shv}_{ldh}(\mathrm{Sch}_{k})), D_{\mathrm{tr},\mathrm{pre}}=D(\mathrm{PSh}(\mathrm{Cor}_{k})), D_{\mathrm{pre}}=D(\mathrm{PSh}(\mathrm{Sch}_{k})),
\]
$\iota : \mathrm{Shv}_{ldh}(\mathrm{Sch}_{k}) \to \mathrm{PSh}(\mathrm{Sch}_{k})$ is the inclusion functor, 
and $F \to I^{\cdot}$ is an injective resolution of $F$ as an $l$dh sheaf with transfers.
Note that any injective object in $\mathrm{Shv}_{ldh}(\mathrm{Cor}_{k})$ is an acyclic object in $\mathrm{Shv}_{ldh}(\mathrm{Sch}_{k})$, we can calculate $l$dh cohomology of $F$ (hence the derived functor $D^{+}(\iota)$) using $I^{\cdot}$, which proves the equation (1).
(2) is obvious.
(3) follows from $\mathbb{Z}(\tilde{X}_{\cdot})_{ldh} \simeq \mathbb{Z}(X)_{ldh}$.
(4), (8), (12) and (15) follow from the adjunction with respect to $t$-structures.
(5) and (14) are exactness of sheafification.
(6) and (13) are the adjuction of derived functors induced by sheafification and inclusion functors (note that these functors are defined at least on the derived categories of bounded below complexes).
Here, we denote by $F_{ldh}$ the $l$dh sheafification of a presheaf with transfers $F$ restrected to $\mathrm{Sch}_{k}$.
The $l$dh sheaf $F_{ldh}$ has a canonical structure of transfers, and one can prove that the functor $(-)_{ldh} : \mathrm{PSh}(\mathrm{Cor}_{k}) \to \mathrm{Shv}_{ldh}(\mathrm{Cor}_{k})$ is left adjoint to the inclusion functor (see \cite[Corollary 3.4.12, Theorem3.4.13]{Kel}). 
(7) follows from a similar argument as in (1).
(9) and (11) are consequences of the projectiveness of $\mathbb{Z}(\tilde{X}_{\cdot})$ and $L(\tilde{X}_{\cdot})$ as presheaves.
Finally, (10) is a direct computation.
\end{proof}

\begin{thm}\label{main_thm_bounded}
Let $k$ be a finite field $\mathbb{F}_{q}$ with characteristic $p$, and $X$ be a quasi-projective scheme of dimension $d$ over $k$.
Suppose that the condition $( \spadesuit )_{X/k}$ holds.
Then, $\chi (\mathrm{Reg}_{i}(X))$ is well-defined as a positive rational number for each $i$, equals to $1$  for each $i>2d$, and the following equation holds up to sign and a power of $p$:
\[
\zeta_{X}(0)^{\ast} = \prod_{i=0}^{2d} \chi (\mathrm{Reg}_{i}(X))^{(-1)^{i+1}}.
\]
Moreover, if resolution of singularities exists for any  variety of dimension $\leq d$ over $k$, then the equation holds only up to sign.
\end{thm}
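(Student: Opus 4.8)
The plan is to reduce the statement to the smooth projective case, which is already settled in Proposition~\ref{proper_smooth_case}, using the weight complex together with its associated weight spectral sequence. First I would invoke Theorem~\ref{boundedness} to represent $W(X)[\tfrac1p]$ by a bounded complex $0\to X_d\to\cdots\to X_0\to 0$ of effective Chow motives, with each $X_a$ a summand of a smooth projective scheme of dimension $\le d-a$ (as in the normal-crossing model of Remark~\ref{rem_sncd}). Boundedness is the indispensable input: it forces the weight spectral sequences to have only finitely many columns, so that every alternating product below is a finite product. Concretely, the homology theories $\mathrm{CH}_0(-,\ast)\otimes\Lambda$ and $H^W_\ast$ are computed by the total complexes of the double complexes $\mathrm{CH}_0(X_a,b)\otimes\Lambda$ and $\Lambda^{\pi_0(X_a)}$ (the latter concentrated in $b=0$), and by Construction~\ref{def_regulator_maps} the regulator is induced by a morphism of these double complexes which is the degree map on each spot $(a,0)$ and zero elsewhere. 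Taking $H_i$ of the induced morphism of total complexes recovers $\mathrm{Reg}_i(X)$, while the map on the $(a,b)$-entry is $\mathrm{Reg}_b(X_a)$.

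I would next verify that $\chi(\mathrm{Reg}_i(X))$ is defined. On the entries the maps $\mathrm{Reg}_b(X_a)$ have kernel and cokernel in $\mathrm{FQ}$: for $b=0$ this is the degree map (finite torsion kernel, finite cokernel) and for $b>0$ the target is zero while the source lies in $\mathrm{FQ}$ by Proposition~\ref{FQ}. Since $\mathrm{FQ}$ is closed under extensions (and $\chi$ is multiplicative in short exact sequences, Lemma~\ref{multiplicativity}), the kernel of $\mathrm{Reg}_i(X)$ automatically lies in $\mathrm{FQ}$, the graded piece $E^{\infty}_{i,0}$ injecting rationally into $H^W_i(X)$; thus the sole obstruction to well-definedness is a free $\Lambda$-summand in the cokernel, i.e.\ the failure of $\mathrm{Reg}_i(X)_{\mathbb{Q}}$ to be surjective. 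Tracking the filtration, this surjectivity is equivalent to the rational vanishing of all differentials $d^r$ leaving $E^r_{i,0}$. For $i=0,1$ these differentials land in columns of negative index and vanish automatically, and for $i>d$ the target $H^W_i(X)$ is zero; for $2\le i\le d$ the required rational surjectivity is exactly the hypothesis $(\spadesuit)_{X/k}$. Hence $\chi(\mathrm{Reg}_i(X))$ is a positive rational for all $i$; for $i>2d$ the bound $\dim X_a\le d-a$ forces $b=i-a>2\dim X_a$, so the source is uniquely divisible (Proposition~\ref{FQ}) and the target vanishes, giving $\chi(\mathrm{Reg}_i(X))=1$.

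Then I would establish the three identities that assemble into the formula. First, applying the Euler-characteristic comparison of Lemma~\ref{morphism_of_complexes} (in the version allowing kernels and cokernels in $\mathrm{FQ}$, proved by the same snake-lemma induction using exactness of $(-)_{\mathrm{tor}}$) to the morphism of total complexes yields
\[
\prod_i \chi(\mathrm{Reg}_i(X))^{(-1)^{i+1}} = \prod_{a,b} \chi(\mathrm{Reg}_b(X_a))^{(-1)^{a+b+1}},
\]
where in each total degree $\chi$ is the product over the finitely many $(a,b)$ with $a+b$ fixed, and all but finitely many factors equal $1$ by the previous step. Second, Proposition~\ref{proper_smooth_case} and its extension to Chow motives (Remark~\ref{extension_to_motives}) give $\prod_b \chi(\mathrm{Reg}_b(X_a))^{(-1)^{b+1}}=\zeta_{X_a}(0)^{\ast}$ up to sign and a power of $p$. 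Third, the $l$-adic realization of the weight complex identifies the class of $M^{c}(X)$ with $\sum_a (-1)^a[X_a]$ in the Grothendieck group of virtual Galois representations, so Grothendieck's determinant formula gives $Z_X(t)=\prod_a Z_{X_a}(t)^{(-1)^a}$ and hence $\zeta_X(0)^{\ast}=\prod_a(\zeta_{X_a}(0)^{\ast})^{(-1)^a}$ up to sign. Chaining the three identities proves the theorem up to sign and a power of $p$; if resolution of singularities holds in dimension $\le d$ one takes $\Lambda=\mathbb{Z}$, Theorem~\ref{boundedness} applies integrally, and Proposition~\ref{proper_smooth_case} holds up to sign alone, so the $p$-power ambiguity disappears.

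The main obstacle, I expect, is the interface between well-definedness and multiplicativity in the middle two paragraphs: one must show that $(\spadesuit)_{X/k}$ is precisely the condition controlling the free $\Lambda$-parts through the spectral-sequence differentials, and that the Euler characteristic is genuinely preserved across the morphism of double complexes even though the weight-homology side is not itself an object of $\mathrm{FQ}$ (it carries free summands). Handling this requires the extension of Lemma~\ref{morphism_of_complexes} to morphisms whose source and target need not lie in $\mathrm{FQ}$ but whose kernels and cokernels do, together with the finiteness supplied by Theorem~\ref{boundedness} and Proposition~\ref{FQ} to ensure that only finitely many entries contribute.
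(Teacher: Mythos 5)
Your overall skeleton matches the paper's: represent the weight complex by a bounded complex $(X_a)_{0\le a\le d}$ of effective Chow motives (Theorem \ref{boundedness}), reduce the zeta formula to the entries via Proposition \ref{proper_smooth_case} and Remark \ref{extension_to_motives}, and conclude with the descent identity $\zeta_X(0)^\ast=\prod_a(\zeta_{X_a}(0)^\ast)^{(-1)^a}$; your analysis of which differentials the hypothesis $(\spadesuit)_{X/k}$ controls, and why degrees $i=0,1$ and $i>d$ are automatic, is actually more explicit than the paper's. But the central bookkeeping step has a genuine gap. The homology theories are computed by the total complex of the double complex of cycle groups $z_0(X_a,b)\otimes\Lambda$, not of a ``double complex with entries $\mathrm{CH}_0(X_a,b)\otimes\Lambda$'': the latter bigraded object is the $E^1$-page, it carries only the $d^1$ differential, and its homology is $E^2$, not the abutment, so taking $H_i$ of the morphism you describe does not recover $\mathrm{Reg}_i(X)$. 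If instead you apply Lemma \ref{morphism_of_complexes} to the genuine chain-level morphism $\mathrm{tot}(z_0(X_\bullet,\ast)_\Lambda)\to\mathrm{tot}(\Lambda^{\pi_0(X_\bullet)}[0])$, neither hypothesis of that lemma holds: the terms $z_0(X_a,b)$ are free of infinite rank and so are the componentwise kernels. Moreover, the repair proposed in your last paragraph --- extending Lemma \ref{morphism_of_complexes} to morphisms with arbitrary terms but componentwise kernels and cokernels in $\mathrm{FQ}$ --- is \emph{false}. Take $C_\ast=[\mathbb{Z}^2\stackrel{d}{\to}\mathbb{Q}\oplus\mathbb{Z}]$ with $d(m,n)=(m,n)$ and $C'_\ast=[\mathbb{Z}^2\stackrel{d'}{\to}\mathbb{Z}]$ with $d'(m,n)=n$, both in degrees $1,0$, and $f_1=\mathrm{id}$, $f_0(q,n)=n$: all componentwise kernels and cokernels are $0$ or $\mathbb{Q}$, hence in $\mathrm{FQ}$, yet $H_0(f)\colon\mathbb{Q}/\mathbb{Z}\to 0$ and $H_1(f)\colon 0\to\mathbb{Z}$ have kernel $\mathbb{Q}/\mathbb{Z}$ and cokernel $\mathbb{Z}$, neither of which lies in $\mathrm{FQ}$, so the $\chi$'s are undefined and the conclusion fails. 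The paper's device for avoiding exactly this trap is the factorization $\mathrm{Reg}=\beta\circ\alpha$ through the termwise truncation $\mathrm{tot}(\mathrm{CH}_0(X_\bullet)_\Lambda[0])$: here $\alpha$ is a termwise surjection whose kernel complex $\mathrm{tot}(K_\ast(X_\bullet))$ has homology in $\mathrm{FQ}$ (its columns have homology $\mathrm{CH}_0(X_a,b)_\Lambda$, $b>0$, which lie in $\mathrm{FQ}$ by Proposition \ref{FQ}), $\beta$ is a morphism of bounded complexes of finitely generated groups with finite componentwise kernels and cokernels (case (ii) of Lemma \ref{morphism_of_complexes}), and the two are recombined by multiplicativity of $\chi$ under composition. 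Your asserted identity $\prod_i\chi(\mathrm{Reg}_i(X))^{(-1)^{i+1}}=\prod_{a,b}\chi(\mathrm{Reg}_b(X_a))^{(-1)^{a+b+1}}$ is true, but the mechanism you cite does not prove it; some such factorization is indispensable.

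A second gap: you deduce $\chi(\mathrm{Reg}_i(X))=1$ for $i>2d$ from the bound $\dim X_a\le d-a$, but Theorem \ref{boundedness} asserts only that the complex is concentrated in degrees $0\le a\le d$; no dimension bound on the individual motives $X_a$ is proved in the paper (the normal-crossing picture of Remark \ref{rem_sncd} presupposes resolution of singularities). The paper instead devotes the entire first half of its proof --- absent from your proposal --- to showing unconditionally that $\mathrm{CH}_0(X,i)$ is uniquely divisible for $i>2d$ and $H^W_j(X)=0$ for $j>d$, by induction on dimension, localization sequences, Gabber's $l'$-alterations, the transfer argument via condition (v) of Theorem \ref{thm:descent2}(b), and a comparison of weight homology with Kato homology. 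Without this input (or a proof of your dimension bounds), the vanishing $\chi(\mathrm{Reg}_i(X))=1$ for $i>2d$ and the finiteness of your alternating products are unjustified.
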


\begin{rem}\label{parshin_bijectivity}
Assume that one of the equivalent conditions of Corollary \ref{parshin} holds for any smooth projective schemes.
Then, the regulator map $\mathrm{Reg}_{i}(X)_{\mathbb{Q}}$ tensored by $\mathbb{Q}$ is bijective for any quasi-projective scheme $X$ (see the proof below).
\end{rem}

\begin{proof}

(I) Firstly, we will show that for any quasi-projective $X$ of dimension $d$, $\mathrm{CH}_{0}(X, i)$ is uniquely divisible if $i>2d$ and $H_{j}^{W}(X)$ is zero if $j>d$.

By induction on dimension and localization exact sequences, we can assume that $X$ is smooth.
In this case, fix $i > 2\dim(U)$, $j>d$ and a prime number $l \neq p$.
It suffices to show that $\mathrm{CH}_{0}(U,i) \otimes \mathbb{Z}_{(l)}$ is uniquely divisible and $H^{W}_{j}(U) \otimes \mathbb{Z}_{(l)}$ is zero.

In the case that $U$ has a smooth compactification with the complement simple normal crossing divisor, this can be observed easily considering Remark \ref{rem_sncd}.

In general case, take an open immersion $U \to X$ with $X$ projective.
Regard the closed complement $Y=X \setminus U$ as a closed subscheme of $X$ with the reduced structure.
Take an $l^{\prime}$-alteration $\pi : X^{\prime} \to X$ such that $X^{\prime}$ is smooth projective and the fiber of $Y$ is a simple normal crossing divisor on $X^{\prime}$ (\cite{Il}).

Since the inverse image $U^{\prime} = \pi^{-1}(U)$ is a smooth scheme with good compactification, $\mathrm{CH}_{0}(U^{\prime},i) \otimes \mathbb{Z}_{(l)}$ is uniquely divisible and $H_{j}(U^{\prime}) \otimes \mathbb{Z}_{(l)}$ is zero.
By induction on dimension and by localization long exact sequences, we can show the following statement: for any open dence immersion $W \to V$ of smooth schemes,$\mathrm{CH}_{0}(W,i)\otimes \mathbb{Z}_{(l)}$ is uniquely divisible (resp. $H_{j}(U^{\prime}) \otimes \mathbb{Z}_{(l)}$ is zero) if and only if $\mathrm{CH}_{0}(V,i)\otimes \mathbb{Z}_{(l)}$ is uniquely divisible (resp. $H_{j}(V^{\prime}) \otimes \mathbb{Z}_{(l)}$  is zero).

Hence, replacing $U$ by an open dence subscheme, we can assume that $g=\pi|_{U} : U^{\prime} \to U$ is a finite flat morphsim of degree prime to $l$ and that $\mathrm{CH}_{0}(U^{\prime},i) \otimes \mathbb{Z}_{(l)}$ is uniquely divisible and $H_{j}^{W}(U^{\prime}) \otimes \mathbb{Z}_{(l)}$ is zero.

The fact that higher Chow groups satisfy the condition (v) of Theorem \ref{thm:descent2}(b) shows that $g_{\ast}g^{\ast} : \mathrm{CH}_{0}(U,i) \otimes \mathbb{Z}_{(l)} \to \mathrm{CH}_{0}(U^{\prime},i) \otimes \mathbb{Z}_{(l)} \to \mathrm{CH}_{0}(U,i) \otimes \mathbb{Z}_{(l)}$ is just the multiplication by the degree of $g$, hence injective.
This proves that $\mathrm{CH}_{0}(U,i) \otimes \mathbb{Z}_{(l)}$ is uniquely divisible.

In order to show that $H_{j}^{W}(U)$ is zero, consider another homology theory $H_{\ast}^{W}(-,\mathbb{Q}_{l}/\mathbb{Z}_{l})$ obtained by replacing $\Lambda$ by $\mathbb{Q}_{l}/\mathbb{Z}_{l}$ in the definition of weight homology theory.
Since weight homology is the homology of a complex of free $\Lambda$-modules, there exists a short exact sequence
\[
0 \to H_{j}^{W}(U) \otimes \mathbb{Q}_{l}/\mathbb{Z}_{l}
\to H_{j}^{W}(U,\mathbb{Q}_{l}/\mathbb{Z}_{l})
\to H_{j-1}^{W}(U)\{ l \} \to 0.
\]
Weight homology group is finitely generated by definition.
So, we have only to show that $H_{j}^{W}(U,\mathbb{Q}_{l}/\mathbb{Z}_{l})$ vanishes.
By \cite[Lemma 9.5]{KeS2}, we have $H_{j}^{W}(U,\mathbb{Q}_{l}/\mathbb{Z}_{l}) \simeq KH_{j}(U,\mathbb{Q}_{l}/\mathbb{Z}_{l})$, where $KH_{j}$ is Kato homology theory (see \cite{KeS1} for the definition).

Since Kato homology with torsion coefficients satisfy the condition (v) of Theorem \ref{thm:descent2}(b) (\cite[Section 4]{KeS1}), we are able to show the vanishing of $KH_{j}(U,\mathbb{Q}_{l}/\mathbb{Z}_{l})$ by the same argument as above.

\ 

(II) Now, let us prove the equation of the theorem.
For any quasi-projective scheme $Z$ over $k$,
let $K_{\ast}(Z)$ denote the kernel complex of the natural surjection $z_{0}(Z, \ast)_{\Lambda} \to \mathrm{CH}_{0}(Z)_{\Lambda}[0]$ i.e.
\[
0 \to K_{\ast}(Z) \to z_{0}(Z, \ast)_{\Lambda} \to \mathrm{CH}_{0}(Z)_{\Lambda}[0] \to 0.
\]

Let $X$ be a quasi-projective scheme of dimension $d$ over $k$.
By Theorem \ref{boundedness}, the weight complex $W(X)$ of $X$ is homotopically equivalent to a bounded complex of effective Chow motives $(X_{a})_{0 \leq a \leq d}$.
The above exact sequence induces an exact sequence of total complexes
\[
0 \to \mathrm{tot}(K_{\ast}(X_{\bullet})) \to \mathrm{tot}(z_{0}(X_{\bullet}, \ast)_{\Lambda}) \stackrel{\alpha}{\to} \mathrm{tot}(\mathrm{CH}_{0}(X_{\bullet}, \ast)_{\Lambda}[0]) \to 0.
\]
In addition, consider a morphism of complexes induced by the degree map:
\[
\mathrm{tot}(\mathrm{CH}_{0}(X_{\bullet}, \ast)_{\Lambda}[0]) \stackrel{\beta}{\to} \mathrm{tot}(\Lambda^{\pi{X_{\bullet}}}[0]).
\]
Note that the composition of maps $\beta \circ \alpha$ induces the regulator maps by definition.
Thus, by the assumption of surjectivity and the long exact sequence which comes from the short exact sequence of complexes, the assertion in the theorem is reduced to the following two statements: (i) the homology groups $H_{i}(\mathrm{tot}(K_{\ast}(X_{\bullet})))$ are objects in $\mathrm{FQ}$ (including the case $i=0$), (ii) the equation
\[
\zeta_{X}(0)^{\ast}
=  \prod_{i=0}^{2d} \chi(H_{i}(\alpha))^{(-1)^{i+1}} \cdot \chi(H_{i}(\beta))^{(-1)^{i+1}}
\]
holds (up to sign and $p$-power), and 
(iii) for any homomorphisms of abelian groups $f : A \to B$ and $g: B \to C$ whose kernels and cokernels are objects of $\mathrm{FQ}$, the same assertion is true for the composition $gf$,  and the equation $\chi(gf) = \chi (g) \cdot \chi (f)$ holds.

The statement (i) is valid since $\mathrm{tot}(K_{\ast}(X_{\bullet}))$ is a complex of objects in $\mathrm{FQ}$ as we have seen in Proposition \ref{parshin}.

The statement (iii) is an easy consequence of the following exact sequence of abelian groups:
\[
0 \to \mathrm{Ker}(f) \to \mathrm{Ker}(gf) \stackrel{f}{\to} \mathrm{Ker}(g) \to \mathrm{Cok}(f) \stackrel{g}{\to} \mathrm{Cok}(gf) \to \mathrm{Cok}(g) \to 0.
\]

Let us prove the statement (ii).
By (i), Lemma \ref{morphism_of_complexes} and Theorem \ref{boundedness}, the right hand side of the equation in (ii) is calculated as follows:
\[
\prod_{i=0}^{2d} \chi(H_{i}(\alpha))^{(-1)^{i+1}}
=
\prod_{b>0, a}  \# \mathrm{CH}_{0}(X_{a}, b)_{\Lambda, tor} ^{(-1)^{a+b}},
\]
\[
\prod_{i=0}^{2d} \chi(\beta)^{(-1)^{i+1}}
=
\prod_{a} \chi(\mathrm{deg} : \mathrm{CH}_{0}(X_{a})_{\Lambda} \to \Lambda^{\pi_{0}(X_{a})})^{(-1)^{a}}.
\]
Hence, the equation in (ii) immediately follows from Proposition \ref{proper_smooth_case},  Remark \ref{extension_to_motives} and the following lemma.
\end{proof}

\begin{lem}
\[
\zeta_{X}(0)^{\ast} = \prod_{a=0}^{d} \zeta_{X_{a}}(0)^{\ast} 
\]
(see Remark \ref{extension_to_motives} for the precise meaning of the right hand side)
\end{lem}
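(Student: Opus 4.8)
The plan is to prove the identity at the level of the full rational function $Z_{X}(t)$ and then pass to leading Laurent coefficients at $t=q^{0}=1$. Recall that by Grothendieck's determinant formula
\[
Z_{X}(t) = \prod_{i} \det\big(1 - \varphi\, t \,\big|\, H^{i}_{c}(\overline{X}_{\bar{k}}, \mathbb{Q}_{\ell})\big)^{(-1)^{i+1}},
\]
where $\ell \neq p$ is a fixed prime, $\varphi$ is the geometric Frobenius, and $H^{*}_{c}$ denotes $\ell$-adic cohomology with compact support; the same formula, applied to the $\ell$-adic realization, defines $Z_{M}(t)$ for an effective Chow motive $M$ as in Remark \ref{extension_to_motives}. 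Writing $L(V,t) := \det(1-\varphi t \mid V)$ for a finite-dimensional $\mathbb{Q}_{\ell}$-representation $V$ of $\mathrm{Gal}(\bar{k}/k)$, the function $L(-,t)$ is multiplicative on short exact sequences, hence descends to a group homomorphism from the Grothendieck group $K_{0}$ of such representations into $\mathbb{Q}_{\ell}(t)^{\times}$.

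First I would make the realization explicit. By Theorem \ref{boundedness} the weight complex $W(X)$ is represented by the bounded complex $(X_{a})_{0 \le a \le d}$, and this complex computes the motive with compact support $M^{c}(X)$; applying $\ell$-adic realization (equivalently, invoking Remark \ref{rem_sncd} for the homology theory given by $\ell$-adic homology, the dual of $H^{*}_{c}$) yields a convergent spectral sequence of $\mathrm{Gal}(\bar{k}/k)$-representations
\[
E_{1}^{a,b} = H^{b}(X_{a}) \Longrightarrow H^{a+b}_{c}(\overline{X}_{\bar{k}}, \mathbb{Q}_{\ell}).
\]
The crucial point is that every differential and every edge map is induced by morphisms of schemes over $k$, hence is Galois-equivariant; in particular Frobenius eigenvalues are respected page by page. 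Since the spectral sequence is bounded and convergent, passing to Euler characteristics in $K_{0}$ gives
\[
\sum_{i} (-1)^{i}\,[H^{i}_{c}(\overline{X}_{\bar{k}}, \mathbb{Q}_{\ell})]
= \sum_{a}(-1)^{a}\sum_{b}(-1)^{b}\,[H^{b}(X_{a})].
\]

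Next I would apply the homomorphism $L(-,t)$ to this identity in $K_{0}$. Since $Z_{X}(t) = L\big(-\sum_{i}(-1)^{i}[H^{i}_{c}],\,t\big)$ and likewise $Z_{X_{a}}(t) = L\big(-\sum_{b}(-1)^{b}[H^{b}(X_{a})],\,t\big)$ for each motive $X_{a}$, the displayed $K_{0}$-equality transforms into the multiplicative identity
\[
Z_{X}(t) = \prod_{a=0}^{d} Z_{X_{a}}(t)^{(-1)^{a}},
\]
which is precisely the asserted equality under the sign convention of Remark \ref{extension_to_motives} (the motive $X_{a}$ being placed in homological degree $a$). Finally, taking the leading coefficient of the Laurent expansion at $t=1$ on both sides — using that this operation is itself a homomorphism into $\mathbb{Q}^{\times}$, carrying products to products and inverting on inverse factors — yields $\zeta_{X}(0)^{\ast} = \prod_{a=0}^{d}(\zeta_{X_{a}}(0)^{\ast})^{(-1)^{a}}$, as claimed.

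The main obstacle I anticipate is justifying that the weight spectral sequence is genuinely a spectral sequence of Galois representations computing $H^{*}_{c}(\overline{X}_{\bar{k}}, \mathbb{Q}_{\ell})$, i.e. that the $\ell$-adic realization of the identification $W(X) \simeq t \circ M^{c}(X)$ from Theorem \ref{boundedness} is compatible with the cohomology with compact support entering Grothendieck's formula, with the correct Galois action. This amounts to invoking the compatibility of Bondarko's weight complex functor with the $\ell$-adic realization; once that is in place, everything else is formal manipulation in $K_{0}$, mirrored on the arithmetic side by the elementary multiplicativity $Z_{\overline{X}} = Z_{X}\cdot Z_{Y}$ under the compactification--complement decomposition (with $\dim Y < \dim X$) that builds the weight complex inductively via the localization triangle $M^{c}(Y) \to M^{c}(\overline{X}) \to M^{c}(X)$.
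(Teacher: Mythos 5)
Your argument is correct in substance, but it takes a different route from the paper's, and it ends at a formula that differs from the printed statement by signs; both points deserve comment. The paper's own proof is much shorter and avoids your main technical worry entirely: it first reduces to the case $X$ projective, using that $W(X)$ is the cone of $W(Y)\to W(\overline{X})$ for a compactification $\overline{X}$ with reduced complement $Y$ (on the zeta side this is just the elementary multiplicativity $Z_{\overline{X}}=Z_{X}\,Z_{Y}$, which you invoke only at the very end); for projective $X$ the representing complex comes from a hyperenvelope $X_{\bullet}\to X$, which is in particular a proper hypercovering, so classical cohomological descent for \'{e}tale cohomology together with Grothendieck's determinant formula gives the claim. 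Nothing about Bondarko's functor $t$, Kelly's theorem, or realization compatibility of $W(X)\simeq t\circ M^{c}(X)$ from Theorem \ref{boundedness} is needed for this lemma. Your own parenthetical alternative is the better fix and stays inside the paper's toolkit: apply Remark \ref{rem_sncd} to the homology theory $X\mapsto H^{i}_{c}(X_{\bar{k}},\mathbb{Q}_{\ell})^{\vee}$, viewed as valued in continuous Galois representations (the construction of Theorem \ref{thm:descent2}(a) works verbatim with that target, since \'{e}tale cohomology of smooth projective schemes factors through Chow correspondences); this hands you the Frobenius-equivariant bounded spectral sequence directly for quasi-projective $X$, after which your $K_{0}$ bookkeeping is fine. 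I would make that the main argument rather than a fallback; it is essentially the paper's implicit argument made explicit, with the added merit of not needing the reduction to the projective case.

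The genuine discrepancy is the sign. Your computation yields $Z_{X}(t)=\prod_{a}Z_{X_{a}}(t)^{(-1)^{a}}$, hence $\zeta_{X}(0)^{\ast}=\prod_{a}\bigl(\zeta_{X_{a}}(0)^{\ast}\bigr)^{(-1)^{a}}$, whereas the lemma as printed carries no exponents. You cannot dispose of this by appealing to ``the sign convention of Remark \ref{extension_to_motives}'': that remark defines $\zeta_{M}$ for an effective Chow motive $M$ plainly by the determinant formula and contains no convention about homological degree. In fact your signed formula is the correct one and the unsigned statement fails as literally written: for $X=\mathbb{A}^{2}$ the weight complex is $[\mathbb{P}^{1}\to\mathbb{P}^{2}]$ with $X_{0}=\mathbb{P}^{2}$, $X_{1}=\mathbb{P}^{1}$, and one has $\zeta_{\mathbb{A}^{2}}(0)^{\ast}=1/(1-q^{2})$, $\zeta_{\mathbb{P}^{2}}(0)^{\ast}=1/\bigl((1-q)(1-q^{2})\bigr)$, $\zeta_{\mathbb{P}^{1}}(0)^{\ast}=1/(1-q)$; only the signed product recovers $1/(1-q^{2})$, the unsigned one being off by $(1-q)^{2}$, which is not in general a sign times a power of $p$. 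So either the lemma has a typo (missing $(-1)^{a}$), or $\zeta_{X_{a}}$ is tacitly the zeta function of the shifted object $X_{a}[a]$, which is your formula in disguise; the exponents $(-1)^{a+b}$ and $(-1)^{a}$ appearing in the proof of Theorem \ref{main_thm_bounded} confirm that the signed version is what is actually used there. State and prove the signed formula explicitly rather than paper over the difference.
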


\begin{proof}
Without loss of generality, one can assume that $X$ is projective.
Indeed, the weight complex of a quasi-projective scheme can be represented as the cone of a morphism between weight complexes of projective schemes (i.e. a compactification and the reduced complement).
In this case, the weight complex of $X$ is a complex associated to a hyperenvelope of $X$, which is in particular a proper hypercovering of $X$.
Then, the desired statement is an easy consequence of Grothendieck's determinant formula of zeta functions and the cohomological descent of \'{e}tale cohomology.
\end{proof}

\begin{cor}\label{s<0_bounded}
Let $k$ be a finite field $\mathbb{F}_{q}$, $p=ch(k)$ and $X$ be a quasi-projective scheme of dimension $d$ over $k$.
Then, for any negative integer $r$, $\chi (\mathrm{Reg}_{i}(X \times \mathbb{A}^{-r}))$ is well-defined for each $i$, equals to $1$  for each $i>2(d-r)$, and the following equations hold up to sign and a power of $p$:
\[
\zeta_{X}(r)^{\ast} = \prod_{i=0}^{2(d-r)} \chi (\mathrm{Reg}_{i}(X \times \mathbb{A}^{-r}))^{(-1)^{i+1}}
=\prod_{i=0}^{2(d-r)} (\# \mathrm{CH}_{r}(X,i)_{\mathrm{tor}})^{(-1)^{i}}.
\]
Moreover, if resolution of singularities exists for any  variety of dimension $\leq d$ over $k$, then the equation holds only up to sign.
\end{cor}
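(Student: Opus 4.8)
The plan is to reduce everything to the case $s=0$, already established in Theorem~\ref{main_thm_bounded}, applied to the quasi-projective scheme $Y:=X\times\mathbb{A}^{-r}$ of dimension $d-r$. Two elementary inputs drive the reduction. On the level of zeta functions, counting points over each $\mathbb{F}_{q^{m}}$ gives $\#(X\times\mathbb{A}^{-r})(\mathbb{F}_{q^{m}})=q^{-rm}\,\#X(\mathbb{F}_{q^{m}})$, hence the identity of rational functions $Z_{X\times\mathbb{A}^{-r}}(t)=Z_{X}(q^{-r}t)$. Comparing the Laurent expansions at $t=1$ and at $t=q^{-r}$ through the substitution $u=q^{-r}t$, I would read off that $\zeta_{X\times\mathbb{A}^{-r}}(0)^{\ast}$ and $\zeta_{X}(r)^{\ast}$ agree up to the explicit factor $q^{-rm}$, where $m$ is the order of $Z_{X}$ at $u=q^{-r}$; this factor is a power of $q$, and is a source of the power of $p$ recorded in the statement. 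On the level of cycles, homotopy invariance of Bloch's cycle complex in the form $\mathrm{CH}_{0}(X\times\mathbb{A}^{-r},i)\cong\mathrm{CH}_{r}(X,i)$ identifies the higher Chow groups entering the Euler characteristics with those in the final product.

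The decisive point, and the reason no surjectivity hypothesis $(\spadesuit)_{Y/k}$ is needed, is the vanishing $H_{i}^{W}(X\times\mathbb{A}^{-r})=0$ for all $i$. To prove it I would show that the weight complex $W(X\times\mathbb{A}^{-r})$ is, up to homotopy, the Tate twist $W(X)(-r)$ of $W(X)$: by the comparison with Bondarko's weight complex functor $t$ established in Theorem~\ref{boundedness}, together with $M^{c}(X\times\mathbb{A}^{-r})=M^{c}(X)(-r)[-2r]$ and the compatibility of $t$ with Tate twists. Applying the additive functor $M\mapsto\mathrm{Hom}_{\mathbb{Z}}(\mathrm{CH}^{0}(M),\Lambda)$ that computes weight homology to this representative then gives a complex that is termwise zero, since $\mathrm{CH}^{0}$ of a strictly positive Tate twist of an effective Chow motive vanishes ($\mathrm{CH}^{0}(M(Y^{(a)})(-r))=\mathrm{CH}^{r}(Y^{(a)})=0$ as $r<0$). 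Alternatively one can argue inductively on $-r$ via the localization sequence for $X\times\mathbb{A}^{1}\hookrightarrow X\times\mathbb{P}^{1}$ and the splitting $H_{i}^{W}(X\times\mathbb{P}^{1})\cong H_{i}^{W}(X)$. In either case the regulator $\mathrm{Reg}_{i}(Y)\colon\mathrm{CH}_{0}(Y,i)\otimes\Lambda\to H_{i}^{W}(Y)=0$ has zero target, so $(\spadesuit)_{Y/k}$ holds trivially and $\chi(\mathrm{Reg}_{i}(Y))=\bigl(\#(\mathrm{CH}_{0}(Y,i)\otimes\Lambda)_{\mathrm{tor}}\bigr)^{-1}$.

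With these inputs I would conclude as follows. Theorem~\ref{main_thm_bounded} applies to $Y$ and yields $\zeta_{Y}(0)^{\ast}=\prod_{i=0}^{2(d-r)}\chi(\mathrm{Reg}_{i}(Y))^{(-1)^{i+1}}$ up to sign and a power of $p$; combined with $Z_{X\times\mathbb{A}^{-r}}(t)=Z_{X}(q^{-r}t)$ this gives the first asserted equality for $\zeta_{X}(r)^{\ast}$. For the second equality I substitute $\chi(\mathrm{Reg}_{i}(Y))=\bigl(\#(\mathrm{CH}_{0}(Y,i)\otimes\Lambda)_{\mathrm{tor}}\bigr)^{-1}$ and use $\mathrm{CH}_{0}(Y,i)\cong\mathrm{CH}_{r}(X,i)$ to rewrite the product as $\prod_{i}(\#\mathrm{CH}_{r}(X,i)_{\mathrm{tor}})^{(-1)^{i}}$, the passage from $\Lambda$-torsion to $\mathbb{Z}$-torsion again costing only a power of $p$. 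Well-definedness of $\chi(\mathrm{Reg}_{i}(Y))$, and the fact that it equals $1$ for $i>2(d-r)$, follow from the finiteness and unique-divisibility statements proved in part~(I) of the proof of Theorem~\ref{main_thm_bounded}: there $\mathrm{CH}_{0}(Y,i)\otimes\Lambda$ lies in $\mathrm{FQ}$, and is uniquely divisible once $i>2\dim Y$, so its torsion is trivial.

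Finally, for the refined assertion under resolution of singularities I would note that the smooth projective varieties occurring in $W(Y)\simeq W(X)(-r)$ are exactly the Tate twists of those occurring in $W(X)$; hence only resolution in dimension $\le d$ (for $X$ and its strata) is actually used, matching the hypothesis, even though $\dim Y=d-r$. The main obstacle I anticipate is precisely the weight-homology vanishing: proving cleanly that $W(X\times\mathbb{A}^{-r})$ is the Tate twist $W(X)(-r)$ and that this twist is annihilated by the weight-homology functor relies on the compatibility of the Gillet--Soul\'{e}--Kerz--Saito weight complex with Bondarko's, i.e.\ exactly the comparison underlying Theorem~\ref{boundedness}. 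Everything else is bookkeeping with the multiplicativity of $\chi$ (Lemma~\ref{multiplicativity}) and the zeta-function substitution.
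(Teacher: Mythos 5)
Your proposal is correct and follows the same overall reduction as the paper: apply Theorem \ref{main_thm_bounded} to $Y=X\times\mathbb{A}^{-r}$, use homotopy invariance $\mathrm{CH}_{0}(Y,i)\cong\mathrm{CH}_{r}(X,i)$ and the zeta-function substitution $Z_{Y}(t)=Z_{X}(q^{-r}t)$, and observe that $(\spadesuit)_{Y/k}$ is vacuous because $H_{i}^{W}(Y)=0$, so that $\chi(\mathrm{Reg}_{i}(Y))=\bigl(\#(\mathrm{CH}_{0}(Y,i)\otimes\Lambda)_{\mathrm{tor}}\bigr)^{-1}$. The one genuine divergence is how you prove the weight-homology vanishing. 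Your primary route --- identifying $W(Y)$ with the Tate twist $W(X)(-r)$ via the Bondarko comparison of Theorem \ref{boundedness} and then killing each term by $\mathrm{CH}^{0}(N\otimes\mathbb{L}^{-r})=0$ --- is valid but heavier, and, since that comparison is only available with $p$ inverted, it cannot by itself give the integral ($\Lambda=\mathbb{Z}$) case needed for the ``up to sign only'' refinement under resolution of singularities. The paper instead uses exactly what you list as your alternative: the localization sequence for $X\times\mathbb{A}^{1}\hookrightarrow X\times\mathbb{P}^{1}$ together with the identification $\Lambda^{\pi_{0}(X)}=\Lambda^{\pi_{0}(X\times\mathbb{P}^{1})}$, which is more elementary, works for either choice of $\Lambda$, and avoids Bondarko's machinery entirely; so in the resolution case you should promote your alternative to the main argument. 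A point in your favor: you explicitly address why resolution in dimension $\leq d$ suffices even though $\dim Y=d-r>d$ (the representatives of $W(Y)$ are built from those of $W(X)$ crossed with smooth projective factors), a bookkeeping issue the paper's proof passes over in silence; you are also more explicit than the paper that the substitution $t\mapsto q^{-r}t$ shifts the special value by a power of $q$, which is absorbed into the ``power of $p$'' ambiguity.
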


\begin{proof}
This follows from the homotopy invariance of higher Chow groups, the equation $\zeta_{X \times \mathbb{A}^{1}}(a)^{\ast} = \zeta_{X}(a-1)^{\ast}$ and the vanishing $H^{W}_{i}(X \times \mathbb{A}^{1}) = 0$ for any integer $i$.
The vanishing of weight homology is an easy consequence of the localization exact sequence and the identification $\mathbb{Z}^{\pi_{0}(X)} = \mathbb{Z}^{\pi_{0}(X \times \mathbb{P}^{1})}$.

Note that the surjectivity assumption as in Theorem \ref{main_thm_bounded} is always satisfied since weight homology groups are all zero.
\end{proof}

According to \cite{CP1} and \cite{CP2}, resolution of singularities exists for any (reduced) variety of dimension $\leq 3$, which shows the following corollary.

\begin{cor}
For any quasi-projective scheme $X$ satisfying $\dim(X) \leq 3$,
the equation in Theorem \ref{main_thm_bounded} holds only up to sign without the assumption of resolution of singularities.
\end{cor}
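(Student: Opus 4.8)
The plan is to deduce this statement directly from the ``moreover'' clause of Theorem~\ref{main_thm_bounded}, the only substantive point being to verify that its hypothesis on resolution of singularities is unconditionally available once $\dim(X)\leq 3$. I would not reprove anything: the entire analytic and homological content already lives in Theorem~\ref{main_thm_bounded}, and the corollary is the result of feeding a known geometric input into its final sentence.

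First, I would recall that resolution of singularities, in the sense fixed in the Notation subsection (existence of a proper birational $X'\to X$ with $X'$ smooth over $k$, for every reduced $X$), is a \emph{theorem} rather than a conjecture for reduced varieties of dimension at most $3$ over a perfect field, by Abhyankar (\cite{Abh}) and Cossart--Piltant (\cite{CP1}, \cite{CP2}). The key observation is that these results provide resolution in \emph{every} dimension $\leq 3$, not merely in the top dimension, so that for a fixed $X$ with $d=\dim(X)\leq 3$ the phrase ``resolution of singularities exists for any variety of dimension $\leq d$ over $k$'' appearing in Theorem~\ref{main_thm_bounded} holds verbatim.

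Second, with resolution available in the relevant range I would take $\Lambda=\mathbb{Z}$ throughout, so that the weight homology theory $H^{W}_{\ast}$, the higher Chow homology theory $\mathrm{CH}_{0}(-,\ast)$, and the regulator $\mathrm{Reg}_{\ast}$ are all formed integrally, with no inversion of $p$; this is what removes the $p$-power ambiguity. One is then exactly in the situation of the last sentence of Theorem~\ref{main_thm_bounded}: still assuming $(\spadesuit)_{X/k}$, the Euler characteristics $\chi(\mathrm{Reg}_{i}(X))$ are well-defined positive rationals, equal to $1$ for $i>2d$, and satisfy
\[
\zeta_{X}(0)^{\ast} = \prod_{i=0}^{2d} \chi (\mathrm{Reg}_{i}(X))^{(-1)^{i+1}}
\]
up to sign only. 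The phrase ``without the assumption of resolution of singularities'' then means precisely that no such assumption need be imposed as a hypothesis, since in this dimension range it is a theorem.

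I do not expect any genuine obstacle: the proof is a packaging of the ``moreover'' part of Theorem~\ref{main_thm_bounded} with the cited resolution result. The one point deserving a line of care is the dimension bookkeeping. The inductive and localization arguments underlying Theorems~\ref{boundedness} and~\ref{main_thm_bounded} invoke resolution for all strata of dimension $\leq d$ produced along the way, not only for $X$ itself; I would emphasize that this is automatic here because \cite{CP1} and \cite{CP2} furnish resolution in every dimension up to $3$, so the inductive hypothesis is satisfied at each step.
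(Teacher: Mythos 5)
Your proposal is correct and matches the paper's own argument: the paper simply cites Cossart--Piltant (\cite{CP1}, \cite{CP2}) for resolution of singularities in dimension $\leq 3$ and then invokes the final (``up to sign only'') clause of Theorem \ref{main_thm_bounded}. Your additional remarks --- that resolution is available in \emph{every} dimension $\leq 3$ so the inductive hypotheses are satisfied, and that one may then take $\Lambda=\mathbb{Z}$ --- are exactly the (implicit) content of the paper's one-line deduction.
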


\addcontentsline{toc}{section}{References}

\end{document}